\newtheorem{thm}{Theorem}
\newtheorem{lemm}{Lemma}[section]
\newtheorem{prop}[thm]{Proposition}
\newtheorem*{thmA}{Theorem A}
\newtheorem*{thmB}{Theorem B}
\newtheorem*{prop0}{Proposition A.0}
\newtheorem*{conj}{Conjecture}
\theoremstyle{remark}
\newtheorem{rmk}{Remark}[section]
\theoremstyle{definition}
\newcommand{\be}{\begin{equation}}
\newcommand{\ee}{\end{equation}}
\newcommand{\bd}{\begin{displaymath}}
\newcommand{\ed}{\end{displaymath}}
     \title[Index and Betti numbers of minimal hypersurfaces]{Comparing the Morse index and the first Betti number of minimal hypersurfaces}
     \author{Lucas Ambrozio, Alessandro Carlotto and Ben Sharp}
     \address{Imperial College London, South Kensington Campus, London SW7 2AZ, United Kingdom}
     \email{l.ambrozio@imperial.ac.uk}
     \address{ETH Inst. f\"ur Theoretische Studien, Clausiusstrasse 47, 8092 Z\"urich, Switzerland}
     \email{alessandro.carlotto@eth-its.ethz.ch}
     \address{SNS Pisa, Piazza dei Cavalieri 7, 56126 Pisa, Italy}
     \email{benjamin.sharp@sns.it}
\begin{document}
     	
     	\begin{abstract}
     		By extending and generalising previous work by Ros and Savo, we describe a method to show that in certain positively curved ambient manifolds the Morse index of every closed minimal hypersurface is bounded from below by a linear function of its first Betti number. The technique is flexible enough to prove that such a relation between the index and the topology of minimal hypersurfaces holds, for example, on all compact rank one symmetric spaces, on products of the circle with spheres of arbitrary dimension and on suitably pinched submanifolds of the Euclidean spaces. These results confirm a general conjecture due to Schoen and Marques-Neves for a wide class of ambient spaces.
     	\end{abstract}
     	
     	\maketitle       
							
	\section{Introduction} 
	
		\indent Since Lawson \cite{Law} constructed closed embedded minimal surfaces of arbitrary genus in the round three-dimensional sphere, many ingenious ways of producing an abundance of closed embedded minimal hypersurfaces in general Riemannian manifolds have been discovered. \\
	\indent A few remarkable examples show how powerful and diverse these methods can be. While Hsiang and Lawson \cite{HsiLaw} have found all homogeneous minimal hypersurfaces of the round spheres in all dimensions, Hsiang \cite{HsiB, HsiC} (and Hsiang-Sterling \cite{HS}) discovered infinitely many embedded non-totally geodesic hyperspheres in round spheres of certain dimensions (in particular, in four dimensions). Kapouleas showed how to construct closed embedded minimal surfaces in three-manifolds with a generic metric by gluing and desingularization methods (see \cite{KapS} for a survey of these methods). Based on a min-max theory for the area functional developed by Almgren, Pitts \cite{Pit} and Schoen and Simon \cite{SchSim} proved that every $(n+1)$-dimensional closed Riemannian manifold contains at least one embedded minimal hypersurface (containing possibly a singular set of codimension seven). The basic idea is to apply the min-max variational approach to one-parameter families of sweep-outs of the ambient manifold by closed hypersurfaces. More recently, by using $k$-parameters sweep-outs, Marques and Neves \cite{MarNevB} proved that under the assumption of positiveness of the Ricci curvature, these ambient spaces contain in fact infinitely many embedded minimal hypersurfaces (which are, as in the previous result, smooth in low dimensions). A broad overview of these methods and of many of their intriguing applications are provided in the ICM lectures by Marques \cite{Mar} and, with more emphasis on mix-max techniques, by Neves \cite{Nev}.  \\
	\indent A different task is to understand the geometric and topological properties of this variety of examples. For instance, the construction by Kapouleas-Yang \cite{KapY} yields sequences of minimal surfaces with uniformly bounded area and unbounded genus. Simon and Smith \cite{Smi} (see also Colding and De Lellis \cite{ColDeL}) have proven that their min-max method applied to sweep-outs of the three dimensional sphere, endowed with an arbitrary Riemannian metric, by one-parameter families of two dimensional spheres produces an embedded minimal two-sphere. Still in three dimensions, Ketover \cite{Ket} proved that the genus of the min-max surface is controlled in an effective way by the genus of the surfaces in the sweep-outs (see also the previous work by De Lellis and Pellandini \cite{DeLPel}). \\
	\indent More generally, one could be interested in understanding the general properties of the set of embedded minimal hypersurfaces in a given manifold. For example, it has been shown by Choi and Schoen \cite{ChoSch} that in an ambient three-manifold with positive Ricci curvature, the set of all embedded minimal surfaces with genus bounded by a fixed constant is compact in the strongest sense.  \\
	\indent In addition to the most basic topological and geometric properties of closed minimal hypersurfaces, there is an important analytic quantity, the (Morse) index. Roughly speaking, the index of a minimal hypersurface counts the maximal number of directions the hypersurface can be deformed in such way that its volume is decreased. In many situations, the index of a minimal hypersurface controls its topology and geometry. The following examples are good illustrations of this phenomenon. Schoen and Yau \cite{SchYau} proved that an embedded closed orientable stable (= index zero) minimal surface in an orientable three-manifold with positive scalar curvature is necessarily a two-sphere. In these ambient manifolds, Chodosh, Ketover and M\'aximo \cite{CKM} have recently proved that the set of closed embedded minimal surfaces with bounded index cannot contain sequences of surfaces with unbounded area or genus (see also the work of the second-named author for the case of bumpy metrics \cite{CarB}). Going beyond three-dimensions, the compactness result of the third-named author \cite{Sha} (and later extended in the joint paper \cite{ACS}) is key in proving that, when the ambient $(n+1)$-manifold has positive Ricci curvature and $2\leq n \leq 6$, the set of closed embedded minimal hypersurfaces with a fixed bound on their index and volume contains only finitely many diffeomorphism types (more general results have been recently obtained by Buzano and the third-named author \cite{BS}, see also \cite{CKM}). There are even cases where closed minimal hypersurfaces can be classified by their index: Urbano \cite{Urb} proved that the only embedded minimal surfaces of the three-sphere that have index at most five are the totally geodesic equators and the Clifford tori. This contribution turned out to be crucial in the proof of the Willmore conjecture by Marques and Neves \cite{MarNevA}. In fact, it is expected that the index of a hypersurface obtained by min-max methods is bounded from above by the number of parameters of the family of sweep-outs considered and, under extra assumptions, is equal to such number. A significant contribution to this conjecture has been very recently presented in \cite{MarNevC}. As the above examples show, this type of results provide a very important tool to understand the min-max hypersurfaces, especially in higher dimensions.\\
	\indent In this article, we shall be concerned with the general problem of comparing two different ``measures of complexity'' of a minimal hypersurface inside a positively curved Riemannian manifold. In this respect, it has been conjectured that the index controls the basic topological invariants of minimal hypersurfaces in an effective way.
	\begin{conj}\label{indconj}[Schoen, Marques-Neves\cite{Mar, Nev}] Let $(\mathcal{N}^{n+1},g)$ be a closed Riemannian manifold with positive Ricci curvature. There exists a positive constant $C$ such that, for every closed embedded minimal hypersurface $M^n$, the following inequality holds
		\begin{equation*}
		index(M) \geq C b_{1}(M).
		\end{equation*}
	\end{conj}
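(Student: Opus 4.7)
The plan is to extend the method of Ros and Savo by using harmonic $1$-forms on $M$ as the source of test functions in the Jacobi quadratic form $Q$. By Hodge theory the space $\mathcal{H}^{1}(M)$ of harmonic $1$-forms has dimension $b_{1}(M)$, so to obtain the desired inequality it suffices to manufacture a linear map from $\mathcal{H}^{1}(M)$ into an $L^{2}$-subspace on which $Q$ is strictly negative, with controllably small kernel.

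Concretely, I would fix a finite collection $\{X_{1},\ldots,X_{N}\}$ of ambient vector fields on $\mathcal{N}$ that is \emph{sufficient}, in the sense that $\sum_{\alpha} X_{\alpha}\otimes X_{\alpha}$ recovers (a constant multiple of) the ambient metric pointwise: on homogeneous ambient spaces a basis of Killing fields works, while for submanifolds of Euclidean space one imports the ambient coordinate parallel fields. For each $\omega \in \mathcal{H}^{1}(M)$ and each index $\alpha$ define the scalar test function $f_{\omega,\alpha} = \omega(X_{\alpha}^{T})$, where $X_{\alpha}^{T}$ denotes the tangential component along $M$. The heart of the argument is to estimate $\sum_{\alpha} Q(f_{\omega,\alpha}, f_{\omega,\alpha})$ from above. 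The Bochner--Weitzenböck identity for $\omega$ produces a $|\nabla\omega|^{2}$ term together with a Ricci term on $M$; the Gauss equation in turn rewrites $\mathrm{Ric}^{M}$ via $\mathrm{Ric}^{\mathcal{N}}$, the second fundamental form $A$, and the mean curvature (which vanishes). The algebraic identity $\sum_{\alpha} X_{\alpha}\otimes X_{\alpha} = c\, g_{\mathcal{N}}$ collapses these tensorial contributions to scalar ones, and positivity of the ambient Ricci curvature should then force a universal bound of the shape
\[
\sum_{\alpha} Q(f_{\omega,\alpha}, f_{\omega,\alpha}) \leq -c'\,\|\omega\|_{L^{2}}^{2},
\]
with $c'>0$ depending only on $(\mathcal{N},g)$. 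A standard averaging argument over the index $\alpha$ then extracts a negative subspace of dimension at least $(1/N)\,b_{1}(M)$, giving the conjectured bound with $C = c'/N$.

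The principal obstacle is that the construction is conditional on the existence of an ambient family of vector fields whose rank-one covariances reconstruct $g_{\mathcal{N}}$; a generic Riemannian manifold with positive Ricci curvature carries no Killing fields at all, so this approach is intrinsically restricted to highly symmetric model spaces (compact rank one symmetric spaces, products with round factors) or to extrinsic geometries in which one can borrow parallel fields from a flat ambient space via a pinched isometric embedding. A secondary technical hurdle is to rule out large kernel of the linear assignment $\omega \mapsto (f_{\omega,\alpha})_{\alpha}$: one must prevent the possibility that the tangential projections $X_{\alpha}^{T}$ degenerate against some nonzero harmonic form, a cancellation which the Weitzenböck identity and the positivity of Ricci must preclude. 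These two difficulties together are what prevents the strategy from covering the conjecture in full generality, and clarify why the present paper's scope is limited to ambient manifolds possessing enough ambient symmetry or extrinsic structure.
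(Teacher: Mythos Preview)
First, a framing point: the statement you are addressing is a \emph{conjecture}, and the paper does not prove it in general. The paper establishes it only for specific ambient manifolds (compact rank one symmetric spaces, certain products, pinched convex hypersurfaces) via the general criterion of Theorem~A. Your proposal is candid about this and is best read as a strategy rather than a proof.

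Your strategy is close in spirit to the paper's but misses its main technical device. The test functions $f_{\omega,\alpha}=\omega(X_{\alpha}^{T})$ you propose, with the $X_{\alpha}$ taken to be the coordinate parallel fields of an ambient Euclidean embedding, are exactly the paper's \emph{first} method (Proposition~\ref{propomega}), and the paper shows that this route yields a clean formula only when $n=2$: the cancellation $\sum_{k}|A(e_{k},\omega^{\sharp})|^{2}=\tfrac{1}{2}|A|^{2}|\omega|^{2}$ is specific to minimal surfaces. The paper's workhorse in arbitrary dimension is the \emph{second} method (Proposition~\ref{propomegaN}): fix an isometric embedding $\mathcal{N}\hookrightarrow\mathbb{R}^{d}$ and use the $\binom{d}{2}$ coordinates of the bivector $N\wedge\omega^{\sharp}$ in $\Lambda^{2}\mathbb{R}^{d}$ as test functions. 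It is this construction that produces the inequality \eqref{desthm0} of Theorem~A, in which all terms involving the second fundamental form $A$ of $M$ cancel but the second fundamental form $II$ of $\mathcal{N}$ in $\mathbb{R}^{d}$ survives.

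This also locates the actual obstruction more precisely than your closing paragraph does. The assignment $\omega\mapsto(f_{\omega,\alpha})_{\alpha}$ is trivially injective (if every Euclidean coordinate of $\omega^{\sharp}$ vanishes then $\omega=0$), and Nash's theorem supplies an isometric Euclidean embedding of any $(\mathcal{N},g)$, so neither a possible kernel nor the absence of Killing fields is the real issue. The genuine open point is that positivity of $\mathrm{Ric}^{\mathcal{N}}$ is not known to guarantee the existence of an isometric embedding for which the extrinsic inequality \eqref{desthm0} holds; your sentence ``positivity of the ambient Ricci curvature should then force a universal bound'' is exactly the step that no one knows how to do. This is why the paper confines itself to ambient spaces admitting embeddings whose second fundamental form $II$ can be computed and controlled explicitly.
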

	
	\indent In fact, we expect a similar conclusion to hold, under the very same assumptions on the ambient manifold, not only for $b_1(M)$ but for all Betti numbers of $M^n$, namely we expect the Morse index of $M^n$ to be bounded from below by a positive constant times the sum of the Betti numbers of the hypersurface in question. However, we remark that this assertion is actually \textsl{equivalent} to the aforementioned conjecture unless the dimension of the ambient manifold is greater or equal than five.

	\indent Notice that some positiveness assumption on the curvature is essential, as the manifolds $(\Sigma^2_\gamma\times S^{1},g+d\theta^2)$, where $(\Sigma^2_\gamma,g)$ is an orientable surface of arbitrary genus $\gamma \geq 2$ with a metric of constant Gaussian curvature $-1$, have non-positive sectional curvature and contain stable minimal surfaces of genus $\gamma$. \\
	\indent A particular example of an ambient manifold for which this conjecture has been already verified is the round sphere $S^{n+1}$ (of arbitrary dimension). The result is due to Savo (see \cite{SavA} and Theorem \ref{thmsavo} below). Roughly speaking, Savo's approach to the problem was to use each harmonic one-form on a minimal hypersurface $M^n$ to produce a set of functions that generate variations that decrease the area of the hypersurface (in mean). Then it was argued that if the index of $M^n$ were too small compared to its first Betti number (= dimension of the space of harmonic one-forms, by Hodge's Theorem), one  would reach a contradiction. \\
	\indent Savo's work was preceded by the work of Ros (see \cite{Ros} and references therein for partially similar contributions). Considering flat three dimensional tori, Ros observed that, when a harmonic one-form on a closed minimal surface is viewed as a vector field of $\mathbb{R}^3$ along the surface, its coordinates have the behaviour described above. In particular, he proved that the index of such surfaces is bounded from below by an affine function (whose coefficients do not depend on the particular surface) of their first Betti numbers (see Theorem 16 in \cite{Ros}). In \cite{Urb2}, Urbano used essentially the same method to prove the analogous result for closed minimal surfaces inside the product of the unit circle with a two-dimensional sphere of radius greater or equal than one. \\
	\indent The aim of the present paper is to extend this approach to other positively curved ambient manifolds, and to give a unified method to address the above conjecture within this framework. We work in the most general setting: given an ambient manifold $(\mathcal{N}^{n+1},g)$ that lies isometrically inside some Euclidean space $\mathbb{R}^d$ (for some $d$ big enough) and a minimal embedded hypersurface $M^n$ inside of it, we consider two kinds of test functions constructed from a harmonic one-form on $M^n$, both inspired by the works of Ros and Savo, both dependent on the specific isometric embedding of the ambient manifold in the Euclidean space (see Proposition \ref{propomega} and Proposition \ref{propomegaN} in Section 3). In particular, we obtain the following result.
	\begin{thmA}\label{thma}
		Let $(\mathcal{N}^{n+1},g)$ be a Riemannian manifold that is isometrically embedded in some Euclidean space $\mathbb{R}^d$. Let $M^{n}$ be a closed embedded minimal hypersurface of $(\mathcal{N}^{n+1},g)$. \\
		\indent Assume that for every non-zero vector field $X$ on $M^n$,
		\begin{multline} \label{desthm0}
		\int_{M} \left[tr_M (Rm^{\mathcal{N}}(\cdot, X,\cdot,X)) + Ric^{\mathcal{N}}(N,N)|X|^2\right] dM\\
		>\int_{M} \left[(|II(\cdot,X)|^2 -|II(X,N)|^2) + (|II(\cdot,N)|^2-|II(N,N)^2|)|X|^2\right]dM
		 \end{multline}
		\noindent where $Rm^{\mathcal{N}}$ denotes the Riemann curvature tensor of $\mathcal{N}^{n+1}$, $II$ denotes the second fundamental form of $\mathcal{N}^{n+1}$ in $\mathbb{R}^d$ and $N$ is a local unit normal vector field on $M^n$. \\
		\indent Then
		\begin{equation*}
		index(M) \geq \frac{2}{d(d-1)}b_{1}(M).
		\end{equation*} 
	\end{thmA}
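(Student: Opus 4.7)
The plan is to adapt the Ros--Savo test function construction to this more general setting by exploiting the auxiliary isometric embedding $\mathcal{N}^{n+1}\hookrightarrow \mathbb{R}^{d}$, and then to run a standard dimension-counting argument against the stability operator. Writing $A$ for the second fundamental form of $M$ in $\mathcal{N}$ and $N$ for its unit normal there, the Jacobi quadratic form of $M$ reads
\[
Q(f,f) = \il{M} \Bigl[|\nabla f|^{2} - \bigl(|A|^{2} + Ric^{\mathcal{N}}(N,N)\bigr)f^{2}\Bigr]\, dM,
\]
and $index(M)$ equals the dimension of a maximal subspace $E\subset C^{\infty}(M)$ on which $Q$ is negative-definite. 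By Hodge theory, the space $\mathcal{H}^{1}(M)$ of harmonic one-forms on $M$ has dimension $b_{1}(M)$.

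Given a harmonic one-form $\omega\in\mathcal{H}^{1}(M)$ with metric dual $X=\omega^{\sharp}$, and the standard basis $\{e_{1},\ldots,e_{d}\}$ of $\mathbb{R}^{d}$, I would associate to each pair $1\leq a<b\leq d$ the scalar test function
\[
f_{ab} := \la X, e_{a}\ra\la N, e_{b}\ra - \la X, e_{b}\ra\la N, e_{a}\ra,
\]
yielding $\binom{d}{2}=d(d-1)/2$ functions on $M$ per harmonic form. Since $X$ is tangent to $M$ and $N$ is normal to $M$ in $\mathcal{N}$, they are orthogonal as vectors in $\mathbb{R}^{d}$, whence the elementary identity
\[
\sum_{a<b} f_{ab}^{2} \;=\; |X|^{2}|N|^{2} - \la X, N\ra^{2} \;=\; |X|^{2},
\]
which already explains the constant $2/(d(d-1))$ appearing in the conclusion.

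The core computation is the evaluation of $\sum_{a<b} Q(f_{ab}, f_{ab})$. The gradient sum $\sum_{a<b}|\nabla f_{ab}|^{2}$ expands into an intrinsic term involving $\nabla X$ together with extrinsic contributions involving both $A$ and $II$, the latter entering through the splitting $\nabla^{\mathbb{R}^{d}}_{Y}Z = \nabla^{\mathcal{N}}_{Y}Z + II(Y,Z)$ for $Y,Z$ tangent to $\mathcal{N}$. Applying the Bochner--Weitzenb\"ock identity to the harmonic form $\omega$ converts $\int |\nabla X|^{2}$ into $\int Ric^{M}(X,X)$, and the Gauss equation combined with the minimality of $M$ rewrites the relevant combination of $Ric^{M}(X,X)$ and $|A|^2|X|^2$ in terms of $tr_{M}Rm^{\mathcal{N}}(\cdot,X,\cdot,X)$. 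This is exactly what Propositions \ref{propomega} and \ref{propomegaN}---the two flavours of test function alluded to in the introduction---are designed to accomplish, and after collecting all contributions one obtains an identity of the form
\[
-\sum_{a<b} Q\bigl(f_{ab},f_{ab}\bigr) \;=\; \bigl[\text{LHS of \eqref{desthm0}}\bigr] - \bigl[\text{RHS of \eqref{desthm0}}\bigr],
\]
valid for every $\omega\in\mathcal{H}^{1}(M)$. Under hypothesis \eqref{desthm0}, the right-hand side is strictly positive whenever $X\not\equiv 0$, so $\sum_{a<b} Q(f_{ab},f_{ab})<0$ for each nonzero harmonic form.

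To conclude, suppose for contradiction that $index(M) < \frac{2}{d(d-1)}\,b_{1}(M)$, i.e.\ $\binom{d}{2}\cdot index(M) < b_{1}(M)$. Letting $\Pi:L^{2}(M)\to E$ denote the $L^{2}$-projection onto the negative subspace, consider the linear map
\[
T: \mathcal{H}^{1}(M)\longrightarrow E^{\oplus\binom{d}{2}}, \qquad T(\omega) := \bigl(\Pi(f_{ab}(\omega))\bigr)_{a<b}.
\]
Since $\dim E^{\oplus\binom{d}{2}} = \binom{d}{2}\cdot index(M) < b_{1}(M) = \dim\mathcal{H}^{1}(M)$, the kernel of $T$ is nontrivial; picking $\omega\in\ker T\setminus\{0\}$, each $f_{ab}(\omega)$ is $L^{2}$-orthogonal to $E$, so $Q(f_{ab}(\omega), f_{ab}(\omega))\geq 0$, and summing gives $\sum_{a<b}Q(f_{ab}(\omega),f_{ab}(\omega))\geq 0$, contradicting the previous paragraph. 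The main technical obstacle is the bookkeeping of the third paragraph: the six distinct integrands appearing in \eqref{desthm0}---the ambient curvature trace $tr_{M}Rm^{\mathcal{N}}(\cdot,X,\cdot,X)$, the $Ric^{\mathcal{N}}(N,N)|X|^{2}$ term, and the four $II$-quadratic pieces---must be produced in precisely the form stated, and the antisymmetry of $f_{ab}$ in $(a,b)$ is what ultimately dictates the signs, in particular the minus signs in front of $|II(X,N)|^{2}$ and $|II(N,N)|^{2}|X|^{2}$.
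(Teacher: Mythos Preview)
Your approach is essentially identical to the paper's: the test functions $f_{ab}$ are precisely the coordinates of $N\wedge X$ in $\Lambda^{2}\mathbb{R}^{d}$ computed in Proposition~\ref{propomegaN}, and your dimension-counting argument is exactly Proposition~\ref{propmain} specialized to $\eta=0$. Two small corrections: only Proposition~\ref{propomegaN} is invoked here (Proposition~\ref{propomega} concerns a different family of test functions, the coordinates of $\omega$ itself, and is specific to ambient dimension three), and your argument as written presupposes that $M$ is two-sided so that a global unit normal $N$ and scalar test functions are available---the paper treats the one-sided case separately by lifting to the two-sheeted cover $\hat{M}\to M$, observing that the pulled-back harmonic forms are $\tau$-invariant while $\hat{N}$ is $\tau$-anti-invariant, so that each $\hat{f}_{ab}$ is odd and hence contributes to the index of the one-sided hypersurface.
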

	\indent See Sections 2 and 3 for more details, and also for the precise geometric meaning of the inequality \eqref{desthm0}. \\
	\indent Specializing the above result, we were able to check that the conjecture quoted above is true in a number of ambient spaces. In particular, we prove the conjecture for the projective spaces $\mathbb{RP}^{n+1}, \mathbb{
		CP}^m,\mathbb{HP}^p, Ca\mathbb{P}^2$ endowed with their standard metrics. Together with the round sphere, these spaces comprise all compact rank one symmetric spaces, i.e., all compact symmetric spaces with positive sectional curvature (see \cite{Pet}).  
	\begin{thmB}
		Any closed, embedded minimal hypersurface of a compact rank one symmetric space is such that its first Betti number is bounded from above by a constant times its index. The constant depends only on the dimension of the ambient manifold.
	\end{thmB}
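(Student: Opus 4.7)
The compact rank one symmetric spaces are $S^{n+1}$, $\mathbb{RP}^{n+1}$, $\mathbb{CP}^m$, $\mathbb{HP}^p$, and $Ca\mathbb{P}^2$. The plan is to apply Theorem A to each of them equipped with a canonical isometric embedding into some $\mathbb{R}^d$, and to verify the integral inequality \eqref{desthm0} for every non-zero vector field $X$ on an arbitrary closed embedded minimal hypersurface $M^n$. The case of $S^{n+1}$ with its standard embedding $S^{n+1}\subset\mathbb{R}^{n+2}$ is essentially Savo's theorem recast in the language of Theorem A, so the substantive work lies in the four projective cases.

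For each $\mathbb{F}\in\{\mathbb{R},\mathbb{C},\mathbb{H},Ca\}$ I would use the standard first-eigenvalue embedding of $\mathbb{F}\mathbb{P}^m$ into the Euclidean space of $\mathbb{F}$-Hermitian matrices, sending a line $\ell$ to the trace-one rank-one orthogonal projector onto $\ell$ (for $\mathbb{F}=Ca$, taking $m=2$ and working inside the $27$-dimensional Albert algebra). These embeddings are equivariant under the full isometry group, they realise the projective space as a minimal submanifold of a round sphere in $\mathbb{R}^d$, and they have parallel second fundamental form; in particular $|II|^2$ and all tensorial contractions of $II$ are pointwise constant on $\mathcal{N}$. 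The ambient dimension $d$ depends only on $m$, so the bound $\mathrm{index}(M)\geq \tfrac{2}{d(d-1)}b_1(M)$ delivered by Theorem A has a constant depending only on $\dim\mathcal{N}$, as required.

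With the embedding fixed I would rewrite the left-hand side of \eqref{desthm0} using the Gauss equation
\begin{equation*}
Rm^{\mathcal{N}}(Y,Z,Y,Z)=\langle II(Y,Y),II(Z,Z)\rangle-|II(Y,Z)|^2,
\end{equation*}
so that both sides become algebraic expressions in $II$. The terms involving $tr_M(Rm^{\mathcal{N}}(\cdot,X,\cdot,X))$ and $Ric^{\mathcal{N}}(N,N)$ then produce pairings of $\sum_i II(e_i,e_i)$ against $II(X,X)$ and against $II(N,N)$; because $\mathcal{N}$ is minimal in a round sphere of $\mathbb{R}^d$, its mean curvature vector is purely radial, and these pairings collapse to explicit multiples of $|X|^2$ via the second fundamental form of that ambient sphere. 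After this reduction the inequality becomes a pointwise algebraic statement in the orthonormal pair $(X/|X|,N)\subset T_p\mathcal{N}$ which, by homogeneity, need only be checked at a single point of $\mathcal{N}$.

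The main obstacle is verifying this pointwise algebraic inequality for each of the four projective cases, since the $\mathbb{F}$-structure on $T_p\mathcal{N}$ interacts nontrivially with the decomposition $T_p\mathcal{N}=T_pM\oplus\mathbb{R}N$. For $\mathbb{RP}^{n+1}$ the curvature is constant and the verification is essentially identical to the spherical case. For $\mathbb{CP}^m$ and $\mathbb{HP}^p$ one must decompose $X$ and $N$ into components parallel and transverse to the $\mathbb{F}$-action and keep track of how $II$ couples them; the sectional curvature pinching $\tfrac{1}{4}\leq K\leq 1$ should then supply a definite margin on the left-hand side that dominates the $II$-contributions on the right. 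The Cayley plane $Ca\mathbb{P}^2$ must be treated separately because of the non-associativity of the octonions, but it is a fixed low-dimensional space and the check reduces to a finite explicit computation using the classical formulas for the curvature of $Ca\mathbb{P}^2$ and the parallel second fundamental form of its Veronese-type embedding.
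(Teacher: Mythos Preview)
Your overall strategy---using the canonical Veronese-type embeddings of the projective spaces into spaces of Hermitian matrices and then invoking Theorem A---matches the paper's approach, and your observation that the relevant inequality reduces to a pointwise algebraic check on an arbitrary orthonormal pair $(X/|X|,N)$ is correct. The paper carries this out by exploiting the single scalar identity $|II(X,X)|^2=4$ for all unit tangent vectors $X$ (a consequence of the embedding being parallel and equivariant), which after polarization and the Gauss equation gives $|II(X,Y)|^2=\tfrac{1}{3}(4-Rm^{\mathcal{N}}(X,Y,X,Y))$ for orthonormal $X,Y$. Plugging this in, the integrand in \eqref{desthm0} is bounded above by $\tfrac{8}{3}(n+3-K)|X|^2$, where $K$ is the Einstein constant of $\mathcal{N}$.

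The genuine gap in your plan is the assertion that the quarter-pinching ``should supply a definite margin'' in all cases. For $\mathbb{HP}^p$ and $Ca\mathbb{P}^2$ one has $K>n+3$ and the margin is indeed positive. But for $\mathbb{CP}^m$ the Einstein constant is exactly $K=n+3$, so the upper bound is $0$, and equality is attained precisely when $X$ is pointwise proportional to $JN$. Since $JN$ is a perfectly good non-zero vector field on $M^n$, the strict integral inequality in the hypothesis of Theorem A \emph{fails} for $\mathbb{CP}^m$: setting $X=JN$ gives both sides equal. Consequently Theorem A cannot be applied directly, and your pointwise algebraic check will not close.

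The paper resolves this by retreating from Theorem A to the underlying Proposition \ref{propmain}, which only requires the strict inequality on \emph{harmonic} one-forms. One then has to rule out the possibility that some non-zero harmonic one-form $\omega$ satisfies $\omega^\sharp=fJN$ for a scalar function $f$. This is done by a separate, somewhat delicate argument (the paper's Appendix): one shows $g(\nabla^M f,JN)=0$ and $f\nabla_{JN}N=-J\nabla^M f$ from $d\omega=d^*\omega=0$, uses the traced Gauss equation to compute $Ric^M(\omega^\sharp,\omega^\sharp)=(2m-2)f^2-|\nabla^M f|^2$, and then integrates the Bochner identity against $\omega^\sharp$ together with the elementary bound $|\nabla^M f|^2\le|\nabla^M\omega|^2$ to force $\omega=0$. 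Your proposal does not anticipate this borderline case, and without it the $\mathbb{CP}^m$ part of Theorem B is incomplete.

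A minor point: the paper does \emph{not} treat $\mathbb{RP}^{n+1}$ via its Veronese embedding, but rather by lifting $M^n$ to the double cover $S^{n+1}$ and checking that the test functions $u_{ij}=\langle N\wedge\omega^\sharp,\theta_{ij}\rangle$ descend (they are even under the antipodal map). Your Veronese approach for $\mathbb{RP}^{n+1}$ would likely also work, but would again need the borderline analysis, since the $\mathbb{RP}^{n+1}$ Veronese embedding satisfies the same $|II(X,X)|^2=\mathrm{const}$ identity and has Einstein constant $K=n$, giving a positive margin in that case.
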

	\indent As we shall see in Subsection \ref{subs:rankone} proving the index conjecture for $\mathbb{C}\mathbb{P}^n$ turns out to be rather subtle, as one needs to handle (in fact: to rule out) the borderline case when equality holds in \eqref{desthm0}. This requires a delicate \textsl{ad hoc} argument, which is presented in Appendix \ref{app:borderline}. \\
	\indent Going beyond highly symmetric examples and the positive Ricci curvature assumption, we also verified that the same control of the first Betti number by the index also holds for minimal surfaces in sufficiently pinched convex hypersurfaces of the Euclidean spaces (Theorem \ref{thmellip}), in three-manifolds that satisfy a pinching condition for the scalar curvature (Theorem \ref{thmpinch3}) and in the product of circles with spheres of any dimension (Theorem \ref{thmprod}). In fact, our methods also allow to prove the index estimate for products of spheres  $S^{p}\times S^{q}$ unless both factors have dimension equal to two (Theorem \ref{thmspheres}).\\
	\indent Furthermore, we would like to point out that when $n=2$ and $Ric^\mathcal{N} >0$, the area estimate of Choi-Wang \cite{ChoWan} and Choi-Schoen \cite{ChoSch} coupled with our upper bound on the first Betti number yields an effective affine area bound on any minimal surface in $(\mathcal{N}^3,g)$ solely in terms of the index. \\
	\indent  We conclude this introduction with a few remarks. Firstly, we observe that if the pinching condition in Proposition \ref{propmain} holds for certain ambient manifolds $(\mathcal{N}^{n+1},g)$ in $\mathbb{R}^d$, then it will also hold, possibly with a slightly larger $\eta$, for small $C^2$ perturbations of $(\mathcal{N}^{n+1},g)$ in $\mathbb{R}^{d+k}$, $k\geq 0$. This in particular shows that the method is flexible enough to deal with isometric embeddings that are definitely not ``rigid'' nor very symmetric, which seems to be a new and peculiar feature of our approach. Secondly, although the methods used in this paper indicate that, under certain curvature conditions of the ambient manifold, the topology of a minimal hypersurface contributes to its index, the converse is not true. For example, the infinitely many embedded minimal three-spheres in the round four-sphere discovered by Hsiang have uniformly bounded area but their indexes are not uniformly bounded (this fact can be seen as consequence of the the third-named author's compactness theorem \cite{Sha}, see \cite{CarA} for further discussion and generalizations). Finally, the case of the complex projective space (see Theorem \ref{thm:compl}) has special interest, since it shows that our results are somehow peculiar to the codimension one scenario as there are plenty of algebraic curves of any genus in the complex projective space that are area-minimising.

	\
	
	\noindent \textit{Acknowledgements:} The authors wish to express their gratitude to Andr\'e Neves for the inspiration for this work and a number of enlightening conversations, to Alessandro Savo for useful correspondence concerning some extensions of his results and to Fernando Cod\'a Marques for his interest in this work. \\
	\indent This project started whilst A.C and B.S. were supported by Professor Neves' ERC Start Grant agreement number P34897, The Leverhulme Trust and the EPSRC Programme Grant entitled  `Singularities of Geometric Partial Differential Equations', and L.A. was supported by CNPq - Conselho Nacional de Desenvolvimento Cient\'ifico e Tecnol\'ogico - Brazil. Since 1st October 2015, B.S. has held a junior visiting position at the Centro di Ricerca Matematica Ennio De Giorgi and would like to thank the centre for its support and hospitality. Since 1st November 2015, L.A. has held a Research Associate position at Imperial College under the Professor Neves' ERC Start Grant PSC and LMCF 278940. Lastly, this work was completed while A. C. was an ETH-ITS fellow: the outstanding support of Dr. Max R\"ossler, of the Walter Haefner Foundation and of the ETH Zurich Foundation are gratefully acknowledged.
	
	\section{Basic material}
	
	\indent In this section, we set some notations that will be used throughout the paper and recall some relevant definitions and results.
	
	\subsection{Two-sided and one-sided hypersurfaces} Let $(\mathcal{N}^{n+1},g)$ be a complete Riemannian manifold. Let $M^{n}$ be a closed, immersed hypersurface in $\mathcal{N}^{n+1}$. We can distinguish these immersions by their normal bundles: $M^n$ is called two-sided when this bundle is trivial, and one-sided otherwise. \\
	\indent When $M^n$ is two-sided, we can choose a smooth normal unit vector field $N$ along $M^n$. When $M^{n}$ is one-sided, we can construct a two-to-one cover $\pi: \hat{M}^{n} \rightarrow M^{n}$ that is a two-sided immersion of $\hat{M}^n$ into $\mathcal{N}^{n+1}$. More precisely, $\hat{M}^n$ is the set of pairs $(x,N)$, where $x$ belongs to $M^n$ and $N$ is some unit vector in $T_{x}\mathcal{N}$ normal to $T_{x}M$, $\pi$ is the obvious projection, the deck transformation $\tau$ sends $(x,N)$ to $(x,-N)$ and the field $\hat{N}((x,N))=N$ gives a trivialization of the normal bundle of the immersion $\pi: \hat{M}^{n} \rightarrow M^n \subset \mathcal{N}^{n+1}$. \\
	\indent When $\mathcal{N}^{n+1}$ is orientable, $M^{n}$ is one-sided if and only if $M^{n}$ is non-orientable, in which case the above construction defines precisely the oriented double cover of $M^n$. \\
	\indent In order to handle the case when $M^n$ may not be orientable, we convene to denote by $dM$ the Riemannian density of $M^n$ (for which we refer the reader, for instance, to the last section in chapter 14 of \cite{Lee}). We shall remind the reader that the divergence theorem does hold true in such setting (see e. g. Theorem 14.34 in \cite{Lee}), which enables us to perform integration by parts whenever needed. In any event, this is only relevant for Subsection \ref{subs:realproj}, \ref{subs:circle} and \ref{subs:threeman} as in all other examples we are about to analyze the manifold $\mathcal{N}^{n+1}$ is simply-connected, which ensures that any closed embedded hypersurface $M^n$ in $\mathcal{N}^{n+1}$ is automatically two-sided and orientable.\\

	\subsection{Isometric embeddings} By Nash's embedding theorem, we can consider any ambient manifold $(\mathcal{N}^{n+1},g)$ to be isometrically embedded in some Euclidean space $\mathbb{R}^{d}$ of sufficiently high dimension $d$. Let $D$ denote the Levi-Civita connection of the Euclidean space and $\nabla$ denote the Levi-Civita connection of $(\mathcal{N}^{n+1},g)$. The relationship between them is given by the formula
	\begin{equation*}
	D_{X}Y = \nabla_{X}Y + II(X,Y),
	\end{equation*}
	\noindent where $X,Y$ are vectors fields tangent to $\mathcal{N}^{n+1}$ and $II(X,Y)$ is a section of the normal bundle of $\mathcal{N}^{n+1}$ in $\mathbb{R}^{d}$. $II$ is the second fundamental form of $\mathcal{N}^{n+1}$ in $\mathbb{R}^d$. \\
	\indent A useful formula is the Gauss equation for the embedding of $(\mathcal{N}^{n+1},g)$ in $\mathbb{R}^d$: for all vector fields $X, Y$ on $\mathcal{N}^{n+1}$,
	\begin{equation} \label{eqGaussN}
	Rm^{\mathcal{N}}(X,Y,X,Y) = \langle II(X,X),II(Y,Y)\rangle - |II(X,Y)|^2,
	\end{equation}
	\noindent where $Rm^{\mathcal{N}}$ denotes the Riemann curvature tensor of $(\mathcal{N}^{n+1},g)$. According to our convention, $Rm^{\mathcal{N}}(X,Y,X,Y)$ gives the sectional curvature of the two-dimensional plane generated by $X$ and $Y$ if $X,Y$ are orthonormal. \\
	\indent Similarly, we have the Gauss equation for $M^n$ inside of $(\mathcal{N}^{n+1},g)$,
	\begin{equation} \label{eqGaussM}
	Rm^{M}(X,Y,X,Y) = Rm^{\mathcal{N}}(X,Y,X,Y) + \langle A(X,X),A(Y,Y)\rangle - |A(X,Y)|^2.
	\end{equation}
	\noindent where $A$ denotes the second fundamental form of $M^n$ in $\mathcal{N}^{n+1}$. According to our conventions, when $M^n$ is two-sided with unit normal field $N$, $A$ is given by $A(X,Y)= -g(\nabla_{X}N,Y)N$ for all $X,Y$ on $M^n$. The induced Riemannian connection on $M^n$ will be denoted by $\nabla^M$.\\
	
	\subsection{The Morse index} A closed embedded hypersurface $M^n$ in $(\mathcal{N}^{n+1},g)$ is called minimal when the first variation of its $n$-dimensional volume is zero for all variations generated by flows of vector fields $X\in \mathcal{X}(\mathcal{N})$. Equivalently, $M^n$ is minimal when the trace of its second fundamental form is identically zero. \\
	\indent The index form of an embedded minimal hypersurface $M^n$ in $\mathcal{N}^{n+1}$ is the quadratic form $Q$ on the set of smooth sections $W$ of the normal bundle of $M^n$ in $\mathcal{N}^{n+1}$ defined by
	\begin{align*}
	Q(W,W) & = - \int_{M} \langle\mathcal{J}_{M}(W),W\rangle dM 
	\\
	& = \int_{M} |\nabla_M^{\perp} W|^2 - (Ric^{\mathcal{N}}(W,W) + |A|^2|W|^2) dM,
	\end{align*}
	\noindent where $\mathcal{J}_{M}$ is the Jacobi operator acting on the normal bundle of $M^n$,
	\begin{equation*}
	\mathcal{J}_{M}W = \Delta_{M}^{\perp}W + Ric^{\mathcal{N}}(W)^{\perp} + |A|^2W.
	\end{equation*}
	\indent In the above formulae, $\nabla_M^{\perp}$ denotes the connection of the normal bundle of $M^n$, $\Delta_M^{\perp}$ is the Laplacian of this connection, $A$ denotes the second fundamental form of $M^n$ and the Ricci tensor of $\mathcal{N}^{n+1}$ is viewed as an endomorphism of the tangent bundle of $\mathcal{N}^{n+1}$. \\
	\indent In fact, $Q(W,W)$ gives precisely the second variation of the volume of $M^{n}$ at $t=0$ under a flow $\phi_{t}$ generated by any vector field $X\in\mathcal{X}(\mathcal{N})$ that coincides with $W$ on $M^n$ (see e. g. \cite{CM}). \\
	\indent The index of a closed embedded minimal hypersurface $M^n$ of $\mathcal{N}^{n+1}$ is the maximal dimension of a vector space of sections of its normal bundle restricted to which the index form is negative definite. Geometrically, the index measures how many directions one can deform $M^{n}$ to decrease its volume. \\
	\indent The index can be more easily computed when $M^{n}$ is two-sided. In this case, the sections of the normal bundle can be identified with the set of smooth functions $\phi$ on $M^n$, the index form of $M^n$ corresponds to the quadratic form
	\begin{equation} \label{defindexfrom}
	Q(\phi,\phi) = \int_{M} |\nabla^{M} \phi|^2 - (Ric^{\mathcal{N}}(N,N) + |A|^2)\phi^2 dM,
	\end{equation}
	\noindent and the Jacobi operator can be seen as the Schr\"odinger type operator
	\begin{equation} \label{defijacobi}
	J_M \phi = \Delta_{M}\phi + Ric(N,N)\phi + |A|^2\phi.
	\end{equation} 
	\noindent The index of $M^n$ is then the number of negative eigenvalues of $J_M$. \\
	\indent When $M^n$ is one-sided, the sections of its normal bundle can be identified with the \textit{odd} functions on the two-sided cover $\hat{M}^n$, i. e. the smooth functions $\phi$ on $\hat{M}^n$ such that $\phi\circ \tau = -\phi$. One can then compute the index of $M^n$ by counting the number of negative eigenvalues of $J_{\hat{M}}$ restricted to the space of odd functions on $\hat{M}^n$ (see \cite{Urb}).\\
	
	\subsection{Harmonic forms}\label{subs:harm} (see, for example, \cite{Pet}). In a closed Riemannian manifold $(M^n,g)$, the Ho\-dge-\-La\-pla\-ce operator is the second order differential operator $\Delta_{p}$ acting on $p$-forms defined by
	\begin{equation*}
	\Delta_p = dd^{*} + d^{*}d
	\end{equation*}  
	\noindent where $d : \Omega^{p}(M) \rightarrow \Omega^{p+1}(M)$ is the exterior differential and $d^{*}:  \Omega^{p}(M) \rightarrow \Omega^{p-1}(M) $ is the formal adjoint\footnote{It is well-known that the operator $d^{*}$ satisfies the identity $d^{\ast}=(-1)^{n(p+1)+1}\ast d \ast$ (where $\ast$ stands for the Hodge star operator) so that, as a result, both $d^{\ast}$ and $\Delta_p$ are globally well-defined even when $M$ is not orientable.} of $d$, defined with respect to the metric $g$. A $p$-form $\omega$ is called harmonic when $\Delta_{p}\omega = 0$.\\
	\indent As $M^n$ is closed, $\omega$ is harmonic if and only if it is closed and co-closed, i.e., if it satisfies the two equations $d\omega=0$ and $d^{*}\omega = 0$. Hodge's Theorem asserts that in a closed Riemannian manifold every De Rham cohomology class contains precisely one harmonic representative. Hence, the dimension of the space of harmonic $p$-forms coincides with the $p$-th Betti number of $M^n$. \\
	\indent The Bochner-Weitzenb\"ock formula relates the Hodge-Laplace operator with the usual (rough) Laplacian on forms:
	\begin{equation*} 
	\Delta_{p}\omega = - \Delta\omega + \mathcal{R}_{p}(\omega),
	\end{equation*}
	\noindent where $\mathcal{R}_{p}$ is a zero-th order curvature term. Such term, while rather complicated when $1<p<n$, has a remarkably simple expression when $p=1$, in which case this formula becomes
	\begin{equation} \label{eqbochner}
	\Delta_{1}\omega = - \Delta\omega + Ric^{M}(\omega^{\sharp},\cdot).
	\end{equation} 
	
	\begin{rmk}
		Here and in the following we use the usual musical isomorphisms to pass from vectors to one-forms. For example, if $\omega$ is a one-form on $(M^n,g)$, $\omega^{\sharp}$ is the unique vector field on $M^n$ such that $\omega(Y)=g(\omega^{\sharp},Y)$ for all vector fields $Y$. If $X$ is a vector, $\omega$ a one-form and $\theta$ a two-form, then $g(X^{\flat}\wedge\omega,\theta)=g(\omega,i_{X}\theta)$, where $i_{X}\theta$ denotes the contraction of the two form $\theta$ by the vector $X$, i.e., the one-form defined by $i_{X}\theta(Y)=\theta(X,Y)$ for all vector fields $Y$.
	\end{rmk}
	
	\indent When $(M^n,g)$ is isometrically immersed in $(\mathcal{N}^{n+1},g)$, it is possible to express the curvature term $\mathcal{R}_p$ of $M^n$ in terms of the corresponding operator in $(\mathcal{N}^{n+1},g)$ and the second fundamental form $A$ (for explicit formulae, see \cite{SavB}).  \\

	\section{The computations}
	
	\indent We describe two methods for using the harmonic one-forms of a minimal hypersurface to generate interesting test functions for the index form. The methods were inspired by the work of Ros \cite{Ros}, Urbano \cite{Urb} and Savo \cite{SavA}. \\
	\indent If $M^n$ is a two-sided minimal hypersurface of $\mathcal{N}^{n+1}\subset\mathbb{R}^d$, the coordinates in $\Lambda^{p}\mathbb{R}^{d}$ of any $p$-form on $M^n$ can be used to produce globally defined functions on $M^n$, which in turn can be used as test functions for the index form of $M^n$. We perform such computation in two cases: the coordinates of a harmonic one-form $\omega$ on $M^n$, and the coordinates of the two-form $N^{\flat}\wedge\omega$, for $\omega$ a harmonic one-form on $M^n$.
	
	\begin{rmk} Here and in all the following sections, $\{e_1,\ldots, e_n\}$ will denote an arbitrary local orthonormal frame on the hypersurface $M^n$. It is immediate to check that all quadratic expressions involving summations on such basis are independent of the particular choice of the basis itself, which implies that such quantities are globally defined on $M^n$. 
		\end{rmk}

	\subsection{First method: coordinates of $\omega$} 
	
	\begin{prop} \label{propomega}
		Let $(\mathcal{N}^{3},g)$ be a Riemannian three-manifold isometrically embedded in some Euclidean space $\mathbb{R}^{d}$. Let $M^2$ be a closed two-sided immersed minimal surface of $\mathcal{N}^{3}$. Given a harmonic one-form $\omega$ on $M^2$, let 
		\begin{equation*}
		u_{i} = \langle\omega^{\sharp},\theta_i\rangle, \quad i=1,\ldots, d,
		\end{equation*}
		\noindent denote the coordinates of $\omega^{\sharp}$ in $\mathbb{R}^{d}$ with respect to some orthonormal basis $\{\theta_i\}_{i=1}^{d}$ of $\mathbb{R}^{d}$. Then
			\begin{equation} \label{eqomega}
			\sum_{i=1}^{d}Q(u_i,u_i) = \int_{M} \left[\sum_{k=1}^{2} |II(e_k,\omega^{\sharp})|^2 - \frac{R^{\mathcal{N}}}{2}|\omega|^2\right] dM.
			\end{equation}
	\end{prop}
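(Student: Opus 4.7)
The plan is to compute $\sum_i Q(u_i,u_i)$ directly from the definition \eqref{defindexfrom}, assemble the terms using the Gauss formulas for $M\subset\mathcal{N}\subset\mathbb{R}^d$, and then use the Bochner identity together with a specific feature of minimal surfaces in three-manifolds to simplify.

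First I would compute $\nabla^M u_i$. Since $u_i=\langle\omega^{\sharp},\theta_i\rangle$ and $\theta_i$ is parallel in $\mathbb{R}^d$, for any $e_k$ tangent to $M$ we have $e_k(u_i)=\langle D_{e_k}\omega^{\sharp},\theta_i\rangle$, where $D$ is the Euclidean connection and $\omega^{\sharp}$ is viewed as a vector field of $\mathbb{R}^d$ along $M$. Decomposing this Euclidean derivative into three mutually orthogonal pieces (tangent to $M$, normal to $M$ in $\mathcal{N}$, normal to $\mathcal{N}$ in $\mathbb{R}^d$) via the two Gauss formulas gives
\begin{equation*}
D_{e_k}\omega^{\sharp}=\nabla^M_{e_k}\omega^{\sharp}+A(e_k,\omega^{\sharp})+II(e_k,\omega^{\sharp}).
\end{equation*}
Using that $\{\theta_i\}$ is orthonormal and that $\omega^{\sharp}$ is tangent to $\mathcal{N}$ (hence has Euclidean norm $|\omega|$), Parseval gives
\begin{equation*}
\sum_{i=1}^d|\nabla^M u_i|^2=\sum_{k=1}^2|D_{e_k}\omega^{\sharp}|^2=|\nabla^M\omega^{\sharp}|^2+\sum_{k=1}^2|A(e_k,\omega^{\sharp})|^2+\sum_{k=1}^2|II(e_k,\omega^{\sharp})|^2,
\end{equation*}
and $\sum_i u_i^2=|\omega|^2$. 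Plugging these into \eqref{defindexfrom} and summing yields an expression for $\sum_i Q(u_i,u_i)$ in which everything is intrinsic except the $II$-term, which already matches what appears on the right-hand side of \eqref{eqomega}.

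Next I would dispose of the remaining terms. The Bochner formula \eqref{eqbochner} applied to the harmonic one-form $\omega$, integrated over the closed surface $M$, gives $\int_M|\nabla^M\omega^{\sharp}|^2\,dM=-\int_M Ric^M(\omega^{\sharp},\omega^{\sharp})\,dM=-\int_M K^M|\omega|^2\,dM$, since $M$ is a surface. For the $A$-term, I would exploit the fact that $M^2$ is minimal in a three-manifold, so its scalar second fundamental form is traceless and symmetric; hence $A^2=\tfrac{1}{2}|A|^2\,\mathrm{Id}$, which yields $\sum_k|A(e_k,\omega^{\sharp})|^2=\tfrac{1}{2}|A|^2|\omega|^2$. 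Finally, I would use the Gauss equation \eqref{eqGaussM} twice: for a two-plane tangent to $M$ it reads $K^M=K^{\mathcal{N}}(T_xM)-\tfrac{1}{2}|A|^2$, while decomposing the scalar curvature of the three-manifold along an adapted frame gives $R^{\mathcal{N}}=2K^{\mathcal{N}}(T_xM)+2Ric^{\mathcal{N}}(N,N)$. Subtracting, the combination that appears in the computation is
\begin{equation*}
K^M+Ric^{\mathcal{N}}(N,N)+\tfrac{1}{2}|A|^2=\tfrac{1}{2}R^{\mathcal{N}}.
\end{equation*}

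Collecting everything, the only surviving $|\omega|^2$ coefficient is exactly $-\tfrac{1}{2}R^{\mathcal{N}}$, which gives \eqref{eqomega}. I expect the main obstacle to be purely bookkeeping: keeping the three-layered orthogonal decomposition $TM\oplus NM|_{\mathcal{N}}\oplus N\mathcal{N}|_{\mathbb{R}^d}$ straight, distinguishing $A$ from $II$, and making sure the surface identity $A^2=\tfrac{1}{2}|A|^2\,\mathrm{Id}$ is what forces the scalar curvature of $\mathcal{N}$ to appear cleanly. No subtle analytic step is required beyond integrating Bochner once.
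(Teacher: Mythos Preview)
Your proposal is correct and follows essentially the same path as the paper's proof: both compute $\sum_i Q(u_i,u_i)$ via the orthogonal decomposition $D_{e_k}\omega^\sharp=\nabla^M_{e_k}\omega^\sharp+A(e_k,\omega^\sharp)+II(e_k,\omega^\sharp)$, use minimality in dimension two to rewrite $\sum_k|A(e_k,\omega^\sharp)|^2=\tfrac12|A|^2|\omega|^2$, integrate the Bochner formula, and then combine the traced Gauss equation with $Ric^M=K^M g$ to produce $-\tfrac12 R^{\mathcal N}|\omega|^2$. The only cosmetic difference is that you invoke $A^2=\tfrac12|A|^2\,\mathrm{Id}$ directly while the paper diagonalises $A$, and you split the Gauss identity into two pieces ($K^M=K^{\mathcal N}(T_xM)-\tfrac12|A|^2$ and $R^{\mathcal N}=2K^{\mathcal N}(T_xM)+2Ric^{\mathcal N}(N,N)$) whereas the paper states the contracted form $2K^M=R^{\mathcal N}-2Ric^{\mathcal N}(N,N)-|A|^2$ in one line.
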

	\begin{proof}
		\indent Let $\{e_1,e_2\}$ be a local orthonormal frame on $M^2$. The functions $u_{i}=\langle\omega^{\sharp},\theta_{i}\rangle$ are such that
		\begin{equation*}
		D_{e_k}u_{i} = \langle D_{e_{k}}\omega^{\sharp},\theta_i\rangle.
		\end{equation*} 
		\indent Thus, plugging the functions $u_{i}=\langle\omega^{\sharp},\theta_{i}\rangle$ in the index form (\ref{defindexfrom}) and summing up on $i=1,\ldots,d$ gives
		\begin{equation*}
		\sum_{i=1}^{d}Q(u_i,u_i) = \int_{M} \sum_{i=1}^{2}|D_{e_i}\omega^{\sharp}|^2 -(|A|^2 + Ric^{\mathcal{N}}(N,N))|\omega|^2 dM.
		\end{equation*}
		\indent Since we have the orthogonal decomposition
		\begin{equation*}
		D_{e_k}\omega^{\sharp} = \nabla_{e_k}^{M}\omega^{\sharp} + A(e_k,\omega^{\sharp}) + II(e_i,\omega^{\sharp})
		\end{equation*}
		\noindent for each $k=1,2$, it follows that
		\begin{equation*} 
		\sum_{k=1}^{2}|D_{e_k}\omega^{\sharp}|^2 = \sum_{k=1}^{2}|\nabla^{M}_{e_k} \omega^{\sharp}|^2 + \sum_{k=1}^{2} |A(e_k,\omega^{\sharp})|^2 + \sum_{i=1}^{2}|II(e_k,\omega^{\sharp})|^2. 
		\end{equation*}
		\indent Thus, we obtain
		\begin{multline*}
		\sum_{i=1}^{d}Q(u_i,u_i) = \int_{M} |\nabla^{M} \omega|^2 + \sum_{i=1}^{2} |A(e_k,\omega^{\sharp})|^2 + \sum_{i=1}^{2}|II(e_k,\omega^{\sharp})|^2dM \\
		-\int_{M} (|A|^2 + Ric^{\mathcal{N}}(N,N))|\omega|^2 dM.
		\end{multline*}
		\indent Fixing a point on $M^2$ and choosing $\{e_1,e_2\}$ in such way that the second fundamental form of $M^2$ in $\mathcal{N}^3$ is diagonalised (i.e., $A(e_i,e_j)=k_i \delta_{ij}N$ for each $i,j=1,2$), it can be checked that
		\begin{equation*}
		\sum_{i=1}^{2}|A(e_{i},\omega^{\sharp})|^2=k_1^2\langle e_1,\omega^{\sharp}\rangle^2+k_2^2\langle e_2,\omega^{\sharp}\rangle^2=\frac{1}{2}|A|^{2}|\omega|^2,
		\end{equation*}
		\noindent since $M^2$ is minimal. Hence,
		\begin{multline} \label{eqauxpropomega}
		\sum_{i=1}^{d}Q(u_i,u_i) = \int_{M} \left(|\nabla^{M} \omega|^2 + \sum_{i=1}^{2}|II(e_k,\omega^{\sharp})|^2\right)dM \\
		-\int_{M} \left(\frac{1}{2}|A|^2 + Ric^{\mathcal{N}}(N,N)\right)|\omega|^2 dM.
		\end{multline}
		\indent Contraction of the Gauss formula (\ref{eqGaussM}) for the minimal surface $M^2$ gives the identity
		\begin{equation*}
		2K^{M} = R^{\mathcal{N}} - 2Ric^{\mathcal{N}}(N,N) - |A|^2,
		\end{equation*}
		\noindent where $K^M$ is the Gaussian curvature of $M^2$. Moreover, since $\omega$ is harmonic, integration of the Bochner-Weitzenb\"ock formula for $p=1$ (equation \eqref{eqbochner}) gives
		\begin{equation*}
		\int_{M} |\nabla^{M}\omega|^2 dM = - \int_{M} Ric^{M}(\omega^{\sharp},\omega^{\sharp}) dM. 
		\end{equation*}
		\indent Since $Ric^{M}(\omega^{\sharp},\omega^{\sharp})=K^{M}|\omega|^2$ as $M^2$ is two-dimensional, formula (\ref{eqomega}) follows now from substituting the above two identities into (\ref{eqauxpropomega}).
	\end{proof}

	\
	
	\subsection{Second method: coordinates of $N^{\flat}\wedge \omega$}
	
	\begin{prop} \label{propomegaN}
		Let $(\mathcal{N}^{n+1},g)$ be a Riemannian manifold isometrically embedded in some Euclidean space $\mathbb{R}^{d}$. Let $M^n$ be a closed two-sided immersed minimal hypersurface of $\mathcal{N}^{n+1}$. Given a harmonic one-form $\omega$ on $M^n$, let 
		\begin{equation*}
		u_{ij} = \langle N\wedge\omega^{\sharp},\theta_{ij}\rangle, \quad i,j = 1,\ldots, d, \, i < j, 
		\end{equation*}
		\noindent denote the coordinates of $N\wedge\omega^{\sharp}$ in $\Lambda^{2}\mathbb{R}^{d}$ with respect to some orthonormal basis $\{\theta_{ij}\}_{i<j}$ of $\Lambda^2\mathbb{R}^{d}$. Then
		\begin{multline} \label{eqomegaN}
		 \sum_{i<j}^{d}Q(u_{ij},u_{ij}) = \int_{M} \left[\sum_{k=1}^{n} |II(e_k,\omega^{\sharp})|^2 + \sum_{k=1}^{n} |II(e_k,N)|^2|\omega|^2\right]dM \\
		 - \int_{M} \left[\sum_{k=1}^{n} Rm^{\mathcal{N}}(e_k,\omega^{\sharp},e_k,\omega^{\sharp}) + Ric^{\mathcal{N}}(N,N)|\omega|^2\right] dM.
		\end{multline}
	\end{prop}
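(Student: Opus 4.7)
The plan is to follow the template of Proposition \ref{propomega}, now with the bivector $N\wedge\omega^\sharp \in \Lambda^2\mathbb{R}^d$ in place of $\omega^\sharp$. Since $\{\theta_{ij}\}_{i<j}$ is orthonormal in $\Lambda^2\mathbb{R}^d$, summing the index form over all $u_{ij}$ gives
\[
\sum_{i<j}Q(u_{ij},u_{ij}) = \int_M \sum_{k=1}^n |D_{e_k}(N\wedge\omega^\sharp)|^2 \,dM - \int_M \bigl(Ric^{\mathcal{N}}(N,N)+|A|^2\bigr)|\omega|^2\, dM,
\]
where I have used $|N\wedge\omega^\sharp|^2 = |\omega|^2$ since $N\perp\omega^\sharp$. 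Everything therefore reduces to computing the gradient integrand.

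For this, I apply the Leibniz rule for $D$, together with the Gauss formulas $D_{e_k}N = \nabla^{\mathcal{N}}_{e_k}N + II(e_k,N)$ (whose first summand lies in $TM$ since $|N|=1$) and $D_{e_k}\omega^\sharp = \nabla^M_{e_k}\omega^\sharp + A(e_k,\omega^\sharp) + II(e_k,\omega^\sharp)$. Using $N\wedge N = 0$, this yields a four-term expansion of $D_{e_k}(N\wedge\omega^\sharp)$ whose summands lie, respectively, in $\Lambda^2 TM$, $(T\mathcal{N})^\perp\wedge TM$, $\mathbb{R}N\wedge TM$, and $\mathbb{R}N\wedge(T\mathcal{N})^\perp$---four pairwise orthogonal subspaces of $\Lambda^2\mathbb{R}^d$ coming from the orthogonal splitting $\mathbb{R}^d|_M = TM\oplus\mathbb{R}N\oplus(T\mathcal{N})^\perp$. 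Expanding each $|a\wedge b|^2 = |a|^2|b|^2 - \langle a,b\rangle^2$ and summing on $k$ (while noting that $\sum_k|\nabla^{\mathcal{N}}_{e_k}N|^2 = |A|^2$ and $\sum_k g(\nabla^{\mathcal{N}}_{e_k}N,\omega^\sharp)^2 = \sum_k|A(e_k,\omega^\sharp)|^2$ by the very definition of $A$) then produces
\[
\sum_k|D_{e_k}(N\wedge\omega^\sharp)|^2 = |A|^2|\omega|^2 - \sum_k|A(e_k,\omega^\sharp)|^2 + \sum_k|II(e_k,N)|^2|\omega|^2 + |\nabla^M\omega|^2 + \sum_k|II(e_k,\omega^\sharp)|^2.
\]

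The $|A|^2|\omega|^2$ piece cancels the curvature potential in the index form, and the two remaining intrinsic terms $|\nabla^M\omega|^2$ and $-\sum_k|A(e_k,\omega^\sharp)|^2$ are eliminated by combining harmonicity of $\omega$ with the Gauss equation of $M^n$ in $\mathcal{N}^{n+1}$. Namely, the Bochner--Weitzenb\"ock identity \eqref{eqbochner} integrates to $\int_M|\nabla^M\omega|^2\,dM = -\int_M Ric^M(\omega^\sharp,\omega^\sharp)\,dM$, while tracing \eqref{eqGaussM} and using $\sum_k A(e_k,e_k) = 0$ gives
\[
Ric^M(\omega^\sharp,\omega^\sharp) = \sum_k Rm^{\mathcal{N}}(e_k,\omega^\sharp,e_k,\omega^\sharp) - \sum_k|A(e_k,\omega^\sharp)|^2.
\]
Substituting cancels $-\sum_k|A(e_k,\omega^\sharp)|^2$ and introduces the required $-\sum_k Rm^{\mathcal{N}}(e_k,\omega^\sharp,e_k,\omega^\sharp)$, producing exactly \eqref{eqomegaN}. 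The only delicate point is the orthogonality bookkeeping in the four-term expansion of $D_{e_k}(N\wedge\omega^\sharp)$; once that is secured, the argument is perfectly parallel to the end of Proposition \ref{propomega}, with the factor $N$ being precisely what gives rise to the new $\sum_k|II(e_k,N)|^2|\omega|^2$ and $-Ric^{\mathcal{N}}(N,N)|\omega|^2$ terms.
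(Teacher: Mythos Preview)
Your proof is correct and follows the same overall strategy as the paper: reduce to computing $\sum_k|D_{e_k}(N\wedge\omega^\sharp)|^2$, then finish with Bochner--Weitzenb\"ock and the Gauss equation exactly as you do. The only difference is in how the gradient integrand is evaluated. The paper keeps $D_{e_k}N$ and $D_{e_k}\omega$ intact, expands $|(D_{e_k}N)^\flat\wedge\omega + N^\flat\wedge D_{e_k}\omega|^2$ via explicit contraction identities (using $\langle D_{e_k}N,N\rangle=0$ and $i_N\omega=0$ to kill the cross term), and only afterwards applies the orthogonal splittings of $D_{e_k}N$ and $D_{e_k}\omega^\sharp$. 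You instead decompose the factors first and then observe that the resulting four wedge products land in the pairwise orthogonal subspaces $\Lambda^2 TM$, $(T\mathcal{N})^\perp\wedge TM$, $\mathbb{R}N\wedge TM$, $\mathbb{R}N\wedge(T\mathcal{N})^\perp$ of $\Lambda^2\mathbb{R}^d$. Your route is a bit more conceptual and avoids the contraction bookkeeping; the paper's is more hands-on but makes the cancellations completely explicit. Either way one arrives at the same intermediate identity for $\sum_k|D_{e_k}(N\wedge\omega^\sharp)|^2$, and from there the arguments are identical.
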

	\begin{proof}
		Let $\{e_1,\ldots,e_n\}$ be a local orthonormal frame on $M^n$. Observe that
		\begin{equation*}
		|\nabla^{M}u_{ij}|^2 = \sum_{k=1}^{n}|D_{e_k}u_{ij}|^2 = \sum_{k=1}^{d}\langle D_{e_k}(N\wedge\omega^{\sharp}),\theta_{ij}\rangle^2. \\
		\end{equation*}
		\indent Hence, summing up the values of the index form (\ref{defindexfrom}) on the functions $u_{ij}$ for all $i<j$, we obtain
		\begin{align} \label{eqauxpropomegaN}
		\sum_{i<j}^{d} Q(u_{ij},u_{ij}) & = \int_{M} \sum_{k=1}^{n}|D_{e_k}(N\wedge\omega^{\sharp})|^2-(Ric^{\mathcal{N}}(N,N)+|A|^2)|N\wedge\omega^{\sharp}|^2dM \\
		\nonumber
		& =\int_{M} \sum_{k=1}^{n}|D_{e_k}(N^{\flat}\wedge\omega)|^2-(Ric^{\mathcal{N}}(N,N)+|A|^2)|N^{\flat}\wedge\omega|^2dM.
	\end{align}
		\indent Obviously, $|N^{\flat}\wedge\omega|=|\omega|$ since $N$ is a unit vector field that is orthogonal to $\omega^{\sharp}$.  Moreover,
		\begin{align*}
		|D_{e_k}(N^{\flat}\wedge\omega)|^2  = &  |(D_{e_k}N)^{\flat}\wedge\omega + N^{\flat}\wedge D_{e_k}\omega|^2 \\
		 = & \langle(D_{e_k}N)^{\flat}\wedge\omega,(D_{e_k}N)^{\flat}\wedge\omega\rangle \\
		& + 2\langle D_{e_k}N^{\flat}\wedge\omega,N^{\flat}\wedge D_{e_k}\omega\rangle \\
		&   + \langle N^{\flat}\wedge D_{e_k}\omega,N^{\flat}\wedge D_{e_k}\omega\rangle \\
		= & \langle\omega,i_{D_{e_{k}N}}((D_{e_k}N)^{\flat}\wedge\omega)\rangle \\
		& + 2\langle i_{N}((D_{e_k}N)^{\flat}\wedge\omega),D_{e_k}\omega\rangle \\
		&   + \langle D_{e_k}\omega,i_N(N^{\flat}\wedge D_{e_k}\omega)\rangle \\
		= & \langle \omega,(|D_{e_k}N|^2\omega - (D_{e_k}N)^{\flat}\wedge i_{D_{e_{k}}N}\omega)\rangle \\
		& + 2\langle\langle D_{e_k}N,N\rangle\omega-(D_{e_k}N)^{\flat}\wedge i_{N}\omega,D_{e_k}\omega\rangle \\
		& + \langle D_{e_k}\omega,(D_{e_k}\omega - N^{\flat}\wedge i_{N}D_{e_k}\omega)\rangle \\
		 = & |D_{e_k}N|^2|\omega|^2 - |i_{D_{e_{k}}N}\omega|^2 + |D_{e_k}\omega|^2-|i_{N}D_{e_k}\omega|^2.
		\end{align*}
		\indent The pairing between vectors and one-forms allow us to rewrite the contractions in the above formula as
		\begin{align*}
		|D_{e_k}(N^{\flat}\wedge\omega)|^2 & = |D_{e_k}N|^2|\omega|^2- \langle \omega^{\sharp},D_{e_k}N\rangle^2 + |D_{e_k}\omega|^2-\langle D_{e_k}\omega^{\sharp},N\rangle^2  \\
		& = |D_{e_k}\omega|^2- 2\langle \nabla_{e_k}\omega^{\sharp},N\rangle^2 + |D_{e_k}N|^2|\omega|^2 \\
		& = |D_{e_k}\omega|^2- 2|A(e_{k},\omega^{\sharp})|^2 + |D_{e_k}N|^2|\omega|^2.
		\end{align*}
		\indent As one has the orthogonal decompositions
		\begin{equation*}
		D_{e_k}\omega^{\sharp} = \nabla^{M}_{e_k}\omega^{\sharp} + A(e_k,\omega^{\sharp})N + II(e_k,\omega^{\sharp}) 
		\end{equation*}
		and
		\begin{equation*}
		  D_{e_k}N=\nabla_{e_k} N + II(e_k,N),
		\end{equation*}
		\noindent it follows that
		\begin{multline*}
		|D_{e_k}(N^{\flat}\wedge\omega)|^2 \\  = |\nabla^{M}_{e_k}\omega|^2 - |A(e_{k},\omega^{\sharp})|^2 + |II(e_k,\omega^{\sharp})|^2 + |\nabla_{e_k}N|^2|\omega|^2 + |II(e_k,N)|^2|\omega|^2.
		\end{multline*}
		\indent Summing up, 
		\begin{align*}
		\sum_{k=1}^{n} |D_{e_k}(N^{\flat}\wedge\omega)|^2 & =   |\nabla^{M}\omega|^2 - \sum_{k=1}^{n} |A(e_{k},\omega^{\sharp})|^2 \\
		&  + \sum_{k=1}^{n}|(II(e_k,\omega^{\sharp})|^2 + |A|^2|\omega|^2 + \sum_{k=1}^{n} |II(e_k,N)|^2|\omega|^2.
		\end{align*}
		\indent Substituting in equation (\ref{eqauxpropomegaN}), we obtain
		\begin{multline*}
		\sum_{i<j}^{d}Q(u_{ij},u_{ij}) =  \quad \int_{M} \left[\sum_{k=1}^{n} |II(e_k,\omega^{\sharp})|^2 + \sum_{k=1}^{n} |II(e_k,N)|^2|\omega|^2\right] dM \\
		+ \int_{M} \left[(|\nabla^{M}\omega|^2 - \sum_{k=1}^{n} |A(e_{k},\omega^{\sharp})|^2 - Ric^{\mathcal{N}}(N,N)|\omega|^2\right]dM.
		\end{multline*}
		\indent Since $\omega$ is harmonic, integration of the Bochner-Weitzenb\"ock formula (\ref{eqbochner}) and the Gauss equation (\ref{eqGaussM}) for the minimal surface $M^n$ in $\mathcal{N}^{n+1}$ gives
		\begin{align*}
		&\int_{M} |\nabla^{M}\omega|^2 dM  \\
		& = - \int_{M} \sum_{i=1}^{n}Rm^{\mathcal{N}}(e_i,\omega^{\sharp},e_{i},\omega^{\sharp})dM + \int_{M} \sum_{i=1}^{n} |A(e_{i},\omega^{\sharp})|^2 dM.
		\end{align*}
		\indent The result follows.
	\end{proof}
	
	\begin{rmk} \label{rmkotherbetti} 
		Partially analogous computations for harmonic $p$-forms instead of one-forms also yield formulae similar to (\ref{eqomega}) and (\ref{eqomegaN}). However, there is an important difference. Whereas in the above computations all the terms involving $A$ cancel out in the final expression, for $p$-forms the final formula will generally contain terms that depend on $A$. Correspondingly, when $p\neq 1, n-1$ all index estimates one may derive by means of this approach rely on $L^2$-smallness assumptions on the second fundamental form of $M^n$, which turn out to be rather restrictive even in the very special case when the ambient manifold is the $(n+1)$-dimensional round sphere. 
	\end{rmk}
	
	\section{How the method works}
	
	\indent In this section, we show that it is possible to estimate the number of eigenvalues of the Jacobi operator of a minimal hypersurface below a certain threshold $\eta$ if there is a subspace of harmonic one-forms on this hypersurface for which the sum of the curvature terms we introduced in the previous section is bounded from above by $\eta$. We call this ``a concentration of the spectrum inequality''.
	
	\begin{prop} \label{propmain}
		Let $(\mathcal{N}^{n+1},g)$ be a Riemannian manifold isometrically embedded in $\mathbb{R}^{d}$. Let $M^{n}$ be a two-sided immersed minimal hypersurface of $\mathcal{N}^{n+1}$. Assume there exists a real number $\eta$ and a $q$-dimensional vector space $\mathcal{V}$ of harmonic one-forms on $M^{n}$ such that for every $\omega\in \mathcal{V}\setminus\left\{0\right\}$, 
			\begin{multline} \label{deshypmain}
			\int_{M} \left[\sum_{k=1}^{n} |II(e_k,\omega^{\sharp})|^2 + \sum_{k=1}^{n} |II(e_k,N)|^2|\omega|^2\right]dM \\
			- \int_{M} \left[\sum_{k=1}^{n} Rm^{\mathcal{N}}(e_k,\omega^{\sharp},e_k,\omega^{\sharp}) + Ric^{\mathcal{N}}(N,N)|\omega|^2\right] dM 
			< \eta \int_{M} |\omega|^{2}dM.
			\end{multline}
		Then
		\begin{equation*} \label{ineqmain}
		\#\{ \text{eigenvalues of the Jacobi operator of $M^{n}$ that are $< \eta$} \}  
		\geq \frac{2}{d(d-1)} q.
		\end{equation*}
	\end{prop}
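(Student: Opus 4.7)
The plan is to combine the identity from Proposition \ref{propomegaN} with the min-max characterization of the spectrum of the Jacobi operator, via a dimension-counting (pigeonhole) argument. Set $D = d(d-1)/2$. For each $\omega \in \mathcal{V}$, consider the $D$ globally defined smooth functions $u_{ij}^{\omega} = \langle N \wedge \omega^{\sharp}, \theta_{ij}\rangle$, $i<j$, exactly as in Proposition \ref{propomegaN}. Since $N$ is a unit vector orthogonal to $\omega^{\sharp}$, orthonormality of the basis $\{\theta_{ij}\}$ gives $\sum_{i<j}(u_{ij}^{\omega})^{2} = |N\wedge \omega^{\sharp}|^{2} = |\omega|^{2}$, and in particular
\begin{equation*}
\sum_{i<j} \int_{M} (u_{ij}^{\omega})^{2}\, dM = \int_{M} |\omega|^{2}\, dM.
\end{equation*}

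Denote by $N_{\eta}$ the number of eigenvalues of the Jacobi operator of $M^{n}$ that are strictly less than $\eta$, and let $E \subset C^{\infty}(M)$ be the span of the corresponding eigenfunctions, so that $\dim E = N_{\eta}$. The standard min-max characterization, applied to the quadratic form $Q$, then yields $Q(\phi,\phi) \geq \eta \int_{M} \phi^{2}\, dM$ for every $\phi$ in the $L^{2}$-orthogonal complement $E^{\perp}$.

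I argue by contradiction: suppose $N_{\eta} < 2q/(d(d-1))$, which is equivalent to $N_{\eta} \cdot D < q$. Consider the linear map $T: \mathcal{V} \to E^{D}$ defined by $T(\omega) = (\pi_{E} u_{ij}^{\omega})_{i<j}$, where $\pi_{E}$ denotes the $L^{2}$-orthogonal projection onto $E$. Since the target has dimension $N_{\eta}\cdot D < q = \dim \mathcal{V}$, there exists a nonzero $\omega_{0} \in \mathcal{V}$ with $T(\omega_{0}) = 0$, which means $u_{ij}^{\omega_{0}} \in E^{\perp}$ for every $i<j$. Summing the inequality $Q(u_{ij}^{\omega_{0}}, u_{ij}^{\omega_{0}}) \geq \eta \int_{M} (u_{ij}^{\omega_{0}})^{2}\, dM$ over $i<j$ and invoking the identity of the first paragraph, we obtain
\begin{equation*}
\sum_{i<j} Q(u_{ij}^{\omega_{0}}, u_{ij}^{\omega_{0}}) \geq \eta \int_{M} |\omega_{0}|^{2}\, dM.
\end{equation*}
On the other hand, Proposition \ref{propomegaN} identifies the left-hand side of this inequality with the very quantity appearing on the left-hand side of hypothesis (\ref{deshypmain}), which by assumption is strictly less than $\eta \int_{M} |\omega_{0}|^{2}\, dM$ (and $\int_{M} |\omega_{0}|^{2}\, dM > 0$ because a nonzero harmonic form on a closed Riemannian manifold has positive $L^{2}$-norm). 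This is a contradiction, so $N_{\eta} \geq 2q/(d(d-1))$ as claimed.

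I do not foresee any substantial obstacle in carrying out this plan: the only delicate points are to ensure that the $u_{ij}^{\omega}$ are globally defined smooth functions on $M^{n}$ (guaranteed by two-sidedness, so that $N$ is a genuine global unit normal) and that the sign conventions for $J_{M}$ used when invoking the min-max characterization are consistent with those adopted in the preliminaries. Once these conventions are fixed, the argument is essentially a textbook pigeonhole count coupled with the pointwise/integral identity already provided by Proposition \ref{propomegaN}.
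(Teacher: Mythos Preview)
Your proposal is correct and follows essentially the same approach as the paper: both arguments set up a linear map from $\mathcal{V}$ into a space of dimension $\binom{d}{2}$ times the number of low eigenvalues (you phrase it via the projection $\pi_E$, the paper via the matrix of inner products $[\int_M u_{ij}\phi_p\,dM]$), use a dimension count to find a nonzero $\omega$ whose test functions $u_{ij}$ lie in $E^{\perp}$, and then derive a contradiction from the variational characterization of eigenvalues together with Proposition~\ref{propomegaN}. The only cosmetic difference is that the paper makes the intermediate inequality $\lambda_{k+1}\geq \eta$ explicit, whereas you absorb it directly into the min-max statement.
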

	\begin{proof}
		(Compare \cite{Ros} Theorem 16 and \cite{SavA} Theorem 1.1). Let $k$ be the number of eigenvalues of the Jacobi operator (\ref{defijacobi}) of $M^{n}$ that are below $\eta$. Denote by $\phi_1,\ldots,\phi_k$ the eigenfunctions associated to the $k$ eigenvalues $\lambda_1 \leq \lambda_2\leq \lambda_3 \ldots \leq \lambda_k$ of the Jacobi operator of $M^n$ that are strictly smaller than $\eta$. \\
		\indent Fix some global orthonormal basis $\{\theta_{ij}\}_{i<j}$ of $\Lambda^{2}\mathbb{R}^d$ and let $u_{ij}=\langle N\wedge \omega^{\sharp},\theta_{ij}\rangle$ be the test functions defined in Proposition \ref{propomegaN}. The map that assigns to each $\omega\in \mathcal{V}$ the vector
		\begin{equation*}
		\left[ \int_{M}u_{ij}\phi_p dM \right],
		\end{equation*}
		\noindent where $i<j$ range from $1$ to $d$ and $p$ ranges from $1$ to $k$, is a linear map from the $q$ dimensional vector space $\mathcal{V}$ to a vector space of dimension 
		\begin{equation*}
		\binom{d}{2}k = \frac{d(d-1)}{2}k.
		\end{equation*} 
		\indent Assume, by contradiction, that $q > \frac{d(d-1)}{2}k$. Then there would exist $\omega$ in $\mathcal{V}\setminus\left\{0\right\}$ such that $\int_{M} u_{ij}\phi_{p}dM = 0$ for all $i<j$ and all $p$. Thus, as each $u_{ij}$ is $L^2$-orthogonal to all the first $k$ eigenfunctions $\phi_{p}$, from the Courant-Hilbert variational characterization of eigenvalues it follows that
		\begin{align*}
		\sum_{i<j}^{d} Q(u_{ij},u_{ij})\geq\lambda_{k+1}\sum_{i<j}^{d}\int_{M} u_{ij}^2dM = \lambda_{k+1}\int_{M} |\omega|^2dM \geq \eta\int_{M}|\omega|^2dM.
		\end{align*}
		\indent In view of Proposition \ref{propomegaN}, this is a contradiction with the assumption that inequality (\ref{deshypmain}) holds for all $\omega$ in $\mathcal{V}\setminus\left\{0\right\}$. The result follows.\\
		\indent 
	\end{proof}
	
	\begin{rmk} \label{rmkoneside}
		The same proof of the above proposition also gives more refined information when the two-sided  immersion $M^n$ arises as the two-sided cover of an one-sided immersed hypersurface in $\mathcal{N}^{n+1}$ (see sections 2.1 and 2.3). If one moreover assumes that all harmonic one-forms on the subspace $\mathcal{V}$ are such that all the corresponding functions $u_{ij}$ are \textit{odd} with respect to the deck transformation of the cover, then one can repeat the argument considering the restriction of the Jacobi operator of $M^n$ to the space of odd functions. It then follows that the number of eigenvalues of the one-sided hypersurface covered by $M^n$ below the threshold $\eta$ is bounded from below by $2q/d(d-1)$.
	\end{rmk}
	
	\indent As a corollary, in the case $\eta =0$ we obtain an estimate on the number of negative values of the Jacobi operator, that is, an index estimate:
	
	\begin{thmA}
		Let $(\mathcal{N}^{n+1},g)$ be a Riemannian manifold that is isometrically embedded in some Euclidean space $\mathbb{R}^d$. Let $M^{n}$ be a closed embedded minimal hypersurface of $(\mathcal{N}^{n+1},g)$. \\
		\indent Assume that for every non-zero vector field $X$  on $M^n$,
			\begin{multline*}
			\int_{M} \left[\sum_{k=1}^{n} Rm^{\mathcal{N}}(e_k,X,e_k,X) + Ric^{\mathcal{N}}(N,N)|X|^2\right] dM\\
			>\int_{M}\left[\sum_{k=1}^{n} |II(e_k,X)|^2 + \sum_{k=1}^{n} |II(e_k,N)|^2|X|^2\right]dM.
			\end{multline*}
		\indent Then
		\begin{equation} \label{ineqcor}
		index(M) \geq \frac{2}{d(d-1)}b_{1}(M).
		\end{equation} 
	\end{thmA}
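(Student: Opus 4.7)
The plan is to obtain Theorem A as a direct specialization of Proposition \ref{propmain} with the threshold chosen to be $\eta = 0$. The hypothesis of Theorem A has been arranged so that, once one transposes the right-hand side to the left and evaluates at the vector field $X = \omega^\sharp$ dual to a harmonic one-form, one recovers exactly the integrated condition \eqref{deshypmain} required to apply Proposition \ref{propmain}.

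Concretely, I would take $\mathcal{V}$ to be the full space of harmonic one-forms on the closed hypersurface $M^n$. By Hodge's theorem, recalled in Subsection \ref{subs:harm}, the dimension of $\mathcal{V}$ equals $b_1(M)$, so one may set $q = b_1(M)$. For each $\omega \in \mathcal{V} \setminus \{0\}$, the dual vector field $\omega^\sharp$ is not identically zero on $M^n$ (since $\omega \mapsto \omega^\sharp$ is a pointwise isomorphism induced by the metric), so the standing hypothesis of Theorem A applies with $X = \omega^\sharp$ and yields
\begin{multline*}
\int_{M} \left[\sum_{k=1}^{n} |II(e_k,\omega^{\sharp})|^2 + \sum_{k=1}^{n} |II(e_k,N)|^2|\omega|^2\right] dM \\
- \int_{M} \left[\sum_{k=1}^{n} Rm^{\mathcal{N}}(e_k,\omega^{\sharp},e_k,\omega^{\sharp}) + Ric^{\mathcal{N}}(N,N)|\omega|^2\right] dM < 0.
\end{multline*}
This is precisely \eqref{deshypmain} with $\eta = 0$.

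Invoking Proposition \ref{propmain} then yields that the number of strictly negative eigenvalues of the Jacobi operator of $M^n$ is at least $\frac{2}{d(d-1)}\,b_1(M)$, and this quantity is by definition $index(M)$. The only point requiring extra care is the case when $M^n$ happens to be one-sided: the computations of Section 3 must then be performed on the two-sided double cover $\hat{M}^n$, and one checks that the test functions $u_{ij}$ associated to the pullbacks of harmonic one-forms on $M^n$ are odd under the deck involution $\tau$, which holds because $\tau$ reverses the normal $\hat{N}$ while fixing the pullback one-form, so that $\hat{N}^\flat\wedge\pi^{*}\omega$ picks up a sign. Remark \ref{rmkoneside} then transfers the count back to $M^n$ itself and delivers the same bound. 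No substantial obstacle is anticipated in this final step; the whole geometric content of Theorem A has already been packaged into Proposition \ref{propmain}, and the real work lies in the proof of the latter.
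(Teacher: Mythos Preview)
Your proposal is correct and follows essentially the same approach as the paper: specialize Proposition \ref{propmain} with $\eta=0$ and $\mathcal{V}$ the full space of harmonic one-forms (so $q=b_1(M)$), and in the one-sided case pass to the two-sided double cover, use pullbacks of harmonic forms from $M^n$, and verify that the test functions $u_{ij}=\langle \hat N\wedge(\pi^*\omega)^\sharp,\theta_{ij}\rangle$ are odd under the deck involution so that Remark \ref{rmkoneside} applies. The paper's argument is identical in structure; the only cosmetic difference is that it phrases the one-sided case by taking $\mathcal{V}$ to be the $\tau$-invariant harmonic forms on $\hat M^n$ and noting this contains the pullbacks, which amounts to the same thing.
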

	\begin{proof}
		Assume $M^n$ is two-sided. Under the assumption of the corollary, the hypothesis of Proposition \ref{propmain} is automatically satisfied for $\eta=0$ and $\mathcal{V}$ the set of all harmonic one-forms on $M^n$, whose dimension is $b_{1}(M)$. Inequality (\ref{ineqcor}) follows. \\
		\indent When $M^n$ is one-sided, let $\pi = \hat{M}^n \rightarrow M^n \subset\mathcal{N}^{n+1}$ be its two-sided cover and let $\mathcal{V}$ be the set of harmonic one-forms on $\hat{M}^n$ that are \textit{invariant} under the deck transformation $\tau : \hat{M}^n \rightarrow \hat{M}^n$. This space has dimension at least $b_{1}(M)$, as it contains all the forms $\pi^{*}\omega$, where $\omega$ is harmonic on $M^n$ (in fact, $\pi^{*}: H^{1}(M;\mathbb{R}) \rightarrow H^{1}(\hat{M};\mathbb{R})$ is injective). The result follows as a consequence of Proposition \ref{propmain} (see Remark \ref{rmkoneside}) once one checks that, by construction of $\hat{M}^n$, for all $\hat{\omega}$ in $\mathcal{V}$, each function $\hat{u}_{ij}=\langle\hat{N}\wedge \omega^{\sharp} , \theta_{ij}\rangle$ satisfy
		\begin{equation*}
		\hat{u}_{ij}(\tau(x)) = \langle\hat{N}(\tau(x))\wedge \omega^{\sharp}(\tau(x)) , \theta_{ij}\rangle = \langle-\hat{N}(x)\wedge \omega^{\sharp}(x) , \theta_{ij}\rangle = - \hat{u}_{ij}(x) 
		\end{equation*}
		\noindent for all $x$ in $\hat{M}^n$, i.e, all functions $u_{ij}$ are \textit{odd} with respect to the deck transformation $\tau$.
	\end{proof}
	
	\indent It is possible to show an exactly analogous concentration of the spectrum inequality under a hypothesis that is compatible with Proposition \ref{propomega}. Instead, by combining formula (\ref{eqauxpropomega}) for both the coordinates of the one-form $\omega$ and the coordinates of its Hodge dual $*\omega$, we shall prove a slightly better result, in the sense that the pinching assumption is weaker (and thus easier to verify in applications).
	
	\begin{prop} \label{propmainb}
		Let $(\mathcal{N}^{3},g)$ be a Riemannian manifold isometrically embedded in $\mathbb{R}^{d}$. Let $M^{2}$ be a closed oriented embedded minimal surface of $\mathcal{N}^{3}$. Assume there exists $\eta$ and a $q$-dimensional vector space $\mathcal{V}$ of harmonic one-forms on $M^{n}$ such that for every $\omega$ in $\mathcal{V}\setminus\left\{0\right\}$,
			\begin{equation} \label{deshypmainb}
			\int_{M} \sum_{k=1}^{2} \left(|II(e_k,\omega^{\sharp})|^2 + |II(e_k,*\omega^{\sharp})|^2\right) - R^{\mathcal{N}}|\omega|^2 dM < 2\eta \int_{M} |\omega|^{2}dM.
			\end{equation}
		Then
		\begin{equation} \label{ineqmainb}
		\#\{ \text{eigenvalues of the Jacobi operator of $M^{n}$ that are $< \eta$} \} \geq \frac{1}{2d} q.
		\end{equation}
	\end{prop}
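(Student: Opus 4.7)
The plan is to mimic the proof of Proposition \ref{propmain}, but using the test functions from Proposition \ref{propomega} applied simultaneously to $\omega$ and to its Hodge dual $*\omega$. The key observation that enables this doubling is that on a closed oriented Riemannian surface the Hodge star commutes with the Hodge--Laplacian, so $*\omega$ is again a harmonic one-form whenever $\omega$ is, with the same pointwise norm $|*\omega|=|\omega|$.

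After fixing a global orthonormal basis $\{\theta_i\}_{i=1}^d$ of $\mathbb{R}^d$, I would introduce the $2d$ test functions
$$u_i = \langle\omega^\sharp,\theta_i\rangle, \qquad v_i = \langle(*\omega)^\sharp,\theta_i\rangle, \qquad i=1,\ldots,d,$$
and apply Proposition \ref{propomega} to both $\omega$ and $*\omega$. Adding the two resulting identities and using $|*\omega|=|\omega|$ gives
$$\sum_{i=1}^d Q(u_i,u_i)+\sum_{i=1}^d Q(v_i,v_i) = \int_M \sum_{k=1}^2 \left(|II(e_k,\omega^\sharp)|^2 + |II(e_k,*\omega^\sharp)|^2\right) - R^{\mathcal{N}}|\omega|^2 \, dM,$$
which is precisely the quantity estimated in the hypothesis \eqref{deshypmainb}.

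I would then follow verbatim the strategy of Proposition \ref{propmain}: let $k$ be the number of eigenvalues of the Jacobi operator strictly below $\eta$, let $\phi_1,\ldots,\phi_k$ be the corresponding eigenfunctions, and consider the linear map $\mathcal{V}\to\mathbb{R}^{2dk}$ that sends $\omega$ to $\bigl(\int_M u_i\phi_p\,dM,\,\int_M v_i\phi_p\,dM\bigr)_{i,p}$. If $q > 2dk$, the kernel would be non-trivial, and for a non-zero $\omega$ in the kernel both collections $\{u_i\}$ and $\{v_i\}$ would be $L^2$-orthogonal to the first $k$ eigenfunctions. The Courant--Hilbert variational characterization would then yield
$$\sum_{i=1}^d Q(u_i,u_i)+\sum_{i=1}^d Q(v_i,v_i) \geq \lambda_{k+1}\left(\int_M |\omega|^2 \, dM + \int_M |*\omega|^2 \, dM\right) = 2\lambda_{k+1}\int_M|\omega|^2\,dM \geq 2\eta\int_M|\omega|^2\,dM,$$
contradicting \eqref{deshypmainb}. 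Hence $q \leq 2dk$, which is exactly the asserted lower bound \eqref{ineqmainb}.

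I do not anticipate any substantive obstacle: this is a straightforward doubling of the pigeonhole scheme from Proposition \ref{propmain}. The slightly improved constant $1/(2d)$ (versus $2/(d(d-1))$) is precisely the dimensional gain obtained by replacing the $\binom{d}{2}$-dimensional family of coordinates of $N^\flat\wedge\omega$ by the $2d$-dimensional combined family of coordinates of $\omega^\sharp$ and $(*\omega)^\sharp$, and this is exactly what makes the pinching assumption \eqref{deshypmainb} weaker than its counterpart \eqref{deshypmain} in the regime where both apply.
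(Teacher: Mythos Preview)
Your proposal is correct and is essentially identical to the paper's own proof: both introduce the $2d$ coordinate functions of $\omega^\sharp$ and $(*\omega)^\sharp$, apply Proposition~\ref{propomega} to each, and run the same linear-algebra pigeonhole against the first $k$ Jacobi eigenfunctions to force $q\le 2dk$. Your explicit remark that $*\omega$ is again harmonic (needed to invoke Proposition~\ref{propomega} for it) is a point the paper leaves implicit.
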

	\begin{proof}
		\indent Let $k$ denote the number of eigenvalues of $J_{M}$ that are strictly less than $\eta$. Keeping the notations of the proof of Proposition \ref{propmain} and fixing an orthonormal basis $\left\{\theta_i\right\}$ of $\Lambda^1\mathbb{R}^d$, consider the linear map that assigns to each harmonic one-form $\omega$ in $\mathcal{V}$ the matrix
		\begin{equation*}
		[\int_{M} u_i\phi_j dM, \int_{M} u^{\ast}_i\phi_j dM],
		\end{equation*}
		\noindent which belongs to a $2dk$-dimensional real vector space, for $u_i=\langle\omega^{\sharp},\theta_i\rangle$ and $u^{\ast}_i=\langle*\omega^{\sharp},\theta_{i}\rangle $.\\
		\indent If $q > 2dk$ there would exist some non-trivial harmonic one-form $\omega$ such that the coordinates of both $\omega$ and $*\omega$ in $\mathbb{R}^{d}$ would be orthogonal to all the first $k$ eigenfunctions $\phi_1,\ldots,\phi_k$. But then
		\begin{equation*}
		\sum_{i=1}^{d}Q(u_i,u_i) + \sum_{i=1}^{d}Q(u^{*}_i,u^{*}_{i}) \geq 2\lambda_{k+1} \int_{M} |\omega|^{2}dM \geq 2\eta \int_{M} |\omega|^{2}dM.
		\end{equation*}
		\indent In view of Proposition \ref{propomega}, this is a contradiction with hypothesis \eqref{deshypmainb}. Therefore $q \leq 2dk$, as we wanted to prove.
	\end{proof}
	
	\section{Some applications}
	
	\indent We present a gallery of examples of ambient manifolds $(\mathcal{N}^{n},g)$ for which our general computations yield, for all of its closed embedded minimal hypersurfaces $M^{n}$, the conjectured lower bound of the index by the first Betti number. As it is clear from the formulae in Section 3, the success of the method depends not only on the intrinsic geometry of $(\mathcal{N}^{n},g)$, but also of the choice of some isometric embedding of it in some Euclidean space. Our examples are such that there is either an obvious or a ``most beautiful'' choice for this embedding.
	
	\subsection{Round spheres (after Savo)} 
	
	\begin{thm}[Savo] \label{thmsavo}
		Let $M^{n}$ be a closed embedded minimal hypersurface of the unit sphere $S^{n+1}$ in $\mathbb{R}^{n+2}$. If $M^n$ is not totally geodesic, then
		\begin{equation*}
		index(M) \geq \frac{2}{(n+2)(n+1)}b_{1}(M) + n + 2.
		\end{equation*}
	\end{thm}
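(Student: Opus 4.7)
My plan is to specialise Theorem A to the embedding $S^{n+1}\hookrightarrow\mathbb{R}^{n+2}$ (so $d=n+2$), and then reinforce the resulting bound by the additive $n+2$ coming from a Simons-type argument with the ambient coordinate functions.

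\textit{Verification of the pinching.} On the unit sphere, $II(Y,Z)=-\langle Y,Z\rangle x$ (where $x$ is the position vector) and $Ric^{\mathcal{N}}(N,N)=n$. For any vector $X$ tangent to a minimal hypersurface $M^n\subset S^{n+1}$ and any local orthonormal frame $\{e_k\}_{k=1}^n$ on $M$, direct computation gives $\sum_k|II(e_k,X)|^2=|X|^2$, $\sum_k|II(e_k,N)|^2=0$, and $\sum_k Rm^{\mathcal{N}}(e_k,X,e_k,X)=(n-1)|X|^2$. The hypothesis of Theorem A thus reduces to $(2n-1)|X|^2>|X|^2$, strict for $n\geq 2$, so Theorem A yields $\mathrm{index}(M)\geq \frac{2}{(n+2)(n+1)}b_1(M)$.

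\textit{The Simons contribution.} Let $V_1:=\mathrm{span}(x_1,\ldots,x_{n+2})$ consist of the restrictions to $M$ of the Euclidean coordinates. Since the mean curvature vector of $M$ in $\mathbb{R}^{n+2}$ equals $-x$ (the trace over $TM$ of the shape operator of $S^{n+1}$), one has $\Delta_M x_i=-n x_i$; integration by parts combined with $Ric^{\mathcal{N}}(N,N)=n$ then gives
\[
Q(x_i,x_j)=-\int_M |A|^2 x_i x_j\,dM.
\]
So $Q|_{V_1}$ is negative semidefinite, and strictly negative definite when $M$ is not totally geodesic (else a nontrivial linear combination $\sum c_i x_i$ would vanish on the analytic $M$, forcing $M$ to lie in an equator $S^n$, hence to coincide with it). This alone gives $\mathrm{index}(M)\geq n+2$.

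\textit{Combining the two contributions.} To obtain the additive improvement, I would refine the dimension count of Proposition \ref{propmain} using the key pointwise identity
\[
\sum_{i=1}^{n+2} x_i\,u_{ij}^\omega=0\quad\text{for each }j,
\]
coming from $\langle x,N\rangle=\langle x,\omega^\sharp\rangle=0$. It shows that $[u_{ij}^\omega]=N\wedge\omega^\sharp$ has pointwise rank two and satisfies $\sum_{i<j}|u_{ij}^\omega|^2=|\omega|^2$. Assume for contradiction $k:=\mathrm{index}(M)<(n+2)+\frac{2b_1(M)}{(n+2)(n+1)}$, and let $W_-$ denote the negative eigenspace of $J_M$. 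By the previous step, the $L^2$-projection $\pi:V_1\to W_-$ is injective, so $V_1':=\pi(V_1)$ has dimension $n+2$; let $W_-'\subset W_-$ be its $L^2$-orthogonal complement in $W_-$, of dimension $k-(n+2)$, with orthonormal basis $\{\psi_{n+3},\ldots,\psi_k\}$. Following the strategy of Proposition \ref{propmain}, I would examine the linear map
\[
\Psi: \mathcal{V}\longrightarrow \mathbb{R}^{\binom{n+2}{2}(k-(n+2))},\qquad \omega\mapsto \Big(\int_M u_{ij}^\omega\,\psi_p\,dM\Big)_{i<j,\,p>n+2}.
\]
A nontrivial kernel element $\omega\neq 0$ would force every $u_{ij}^\omega$ to be $L^2$-orthogonal to $W_-'$. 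Using the identity above, one can then correct the $u_{ij}^\omega$'s by elements of $V_1$ to produce a family $\tilde u_{ij}^\omega$ orthogonal to the whole of $W_-$ (so $\sum Q(\tilde u_{ij}^\omega)\geq 0$) while keeping the correction controlled; comparing with the explicit value $\sum_{i<j}Q(u_{ij}^\omega)=-2(n-1)\int_M |\omega|^2<0$ obtained from Proposition \ref{propomegaN} and the computation in the verification step above should yield a contradiction. Hence $\Psi$ is injective and $b_1(M)\leq \binom{n+2}{2}(k-(n+2))$, which rearranges to the claimed bound.

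\textit{Main obstacle.} The decisive and most delicate step is the construction of the correction $u_{ij}^\omega\leadsto \tilde u_{ij}^\omega$: one must combine the pointwise constraint $\sum_i x_i u_{ij}^\omega=0$ with the Gram-form expression $Q(x_i,x_j)=-\int|A|^2 x_i x_j\,dM$ on $V_1$ to ensure that, after projecting away from $V_1'$, the strictly negative total $Q$-value of the $u_{ij}^\omega$'s survives. The rank-two structure of $[u_{ij}^\omega]$ and the identity $\sum_{i<j}|u_{ij}^\omega|^2=|\omega|^2$ are the key ingredients for carrying out this bookkeeping cleanly.
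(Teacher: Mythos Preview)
Your verification of the pinching and the basic bound via Theorem A is correct, and your Simons-type argument that $Q|_{V_1}$ is negative definite (with $V_1$ spanned by the coordinate functions $x_i$) is also fine. The difficulty is entirely in your ``combining'' step, which you yourself flag as the main obstacle and do not actually carry out. The correction $u_{ij}^\omega \leadsto \tilde u_{ij}^\omega$ you propose is only a heuristic: you need to subtract from each $u_{ij}^\omega$ an element of $V_1$ so that the result becomes $L^2$-orthogonal to all of $W_-$, but the $x_i$ are \emph{not} eigenfunctions of $J_M$ (indeed $J_M x_i = |A|^2 x_i$), so orthogonality to $V_1' = \pi(V_1)\subset W_-$ and orthogonality to $V_1$ itself are unrelated, and there is no reason the cross-terms in $Q$ should be controllable. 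The pointwise identity $\sum_i x_i u_{ij}^\omega = 0$ is correct but does not by itself control these $L^2$ cross-terms. As written, the proposal has a genuine gap here.

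The paper avoids this difficulty by a much simpler and different mechanism. Rather than applying Theorem A (that is, Proposition \ref{propmain} with $\eta = 0$), it applies Proposition \ref{propmain} with the threshold $\eta$ taken just above $-(2n-2)$: since the integrand in \eqref{eqomegaN} equals exactly $-(2n-2)|\omega|^2$ on the sphere, one obtains
\[
\#\{\text{eigenvalues of }J_M \text{ that are} \leq -(2n-2)\} \;\geq\; \frac{2}{(n+2)(n+1)}\, b_1(M).
\]
For the additive term the paper uses the \emph{normal} coordinates $N_i$ rather than the position coordinates $x_i$; the $N_i$ are genuine eigenfunctions of $J_M$ with eigenvalue exactly $-n$ (from Simons' identity $\Delta_M N_i + |A|^2 N_i = 0$), and when $M$ is not totally geodesic this eigenvalue has multiplicity at least $n+2$. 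For $n \geq 3$ one has $-n > -(2n-2)$, so these $n+2$ eigenvalues are automatically disjoint from those counted above, and the two contributions simply add; the case $n=2$ is borderline and is handled by an \emph{ad hoc} argument in Savo's original paper. The key idea you are missing is thus to exploit the \emph{concentration-of-spectrum} inequality at a negative threshold rather than merely the index bound at $\eta = 0$; once you do this, no correction argument or bookkeeping is needed at all.
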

	\begin{proof}
		The unit sphere in $\mathbb{R}^{n+2}$ has constant sectional curvature equal to one and is totally umbilic. In fact, its second fundamental form $II$ satisfies $|II(X,Y)|=|\langle X,Y\rangle|$ for all tangent vector fields $X$ and $Y$. \\
		\indent Thus, for any closed embedded minimal hypersurface $M^{n}$ of $S^{n+1}$, it is immediate to check that for any harmonic one-form $\omega$ on $M^{n}$,
		\begin{multline*}
		 \sum_{k=1}^{n}|II(e_k,\omega^{\sharp})|^2 + \sum_{k=1}^{n}|II(e_k,N)|^2|\omega|^{2} \\
		 - \sum_{k=1}^{n} Rm^{\mathcal{N}}(e_k,\omega^{\sharp},e_k,\omega^{\sharp}) - Ric(N,N)|\omega|^2 \\ 
		= -(2n-2) |\omega|^2. 
		\end{multline*}
		\indent By Proposition \ref{propmain}, we conclude that
		\begin{multline*}
		 \#\{ \text{eigenvalues of $J_M$ that are less or equal than $-2n+2$}  \} \\
		  \geq \frac{2}{(n+2)(n+1)} b_{1}(M).
		\end{multline*}
		\indent Moreover, it is possible to check that each coordinate of the unit normal field $N=(N_1,\ldots,N_{n+2})$ of $M^{n}$ satisfies the equation
		\begin{equation*}
		J_{M}N_i - n N_{i} = \Delta_{M}N_i + |A|^2 N_{i} = 0,
		\end{equation*} 
		\noindent and also to show that, when $M^n$ is not totally geodesic, the multiplicity of $-n$ as an eigenvalue of $J_M$ is at least $n+2$ (this dates back to J. Simons \cite{Sim}, see for example \cite{SavA}, proof of Corollary 2.2). Thus, when $n\geq 3$ we can estimate the index of any embedded minimal non-totally geodesic hypersurface $M^{n}$ of the round sphere $S^{n+1}$ by
		\begin{equation*}
		index(M) \geq \frac{2}{(n+2)(n+1)}b_{1}(M) + n + 2.
		\end{equation*}
		The case $n=2$ is somehow peculiar and an \textsl{ad hoc} argument is needed to conclude, for which we refer the reader to Section 4 of \cite{SavA}.	\end{proof}
	
	\begin{rmk}
		In ambient dimension three, Savo proved a better estimate by using formula \eqref{deshypmainb} for the coordinates of harmonic one-forms and by performing a detailed analysis of the dimension of the set of harmonic one-forms whose coordinates all belong to the eigenspace of the Jacobi operator associated to the eigenvalue $-2$ (see the proof of Theorem 1.3 in \cite{SavA}).
	\end{rmk}
	
	\
	
	\subsection{Real projective spaces} \label{subs:realproj}
	
	\begin{thm}
		Let $M^{n}$ be a closed embedded minimal hypersurface of the real projective space $\mathbb{RP}^{n+1}$ endowed with its metric of constant sectional curvature one. Then
		\begin{equation*}
		index(M) \geq \frac{2}{(n+2)(n+1)}b_{1}(M).
		\end{equation*}
	\end{thm}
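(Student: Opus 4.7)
My plan is to reduce the problem to the corresponding computation on the round sphere by passing to the antipodal double cover. Let $\pi: S^{n+1} \to \mathbb{RP}^{n+1}$ be the standard Riemannian double cover, with deck transformation the antipodal map $\tau(x) = -x$, and set $\widetilde M := \pi^{-1}(M) \subset S^{n+1}$. Then $\widetilde M$ is a closed embedded minimal hypersurface of $S^{n+1} \subset \mathbb{R}^{n+2}$ invariant under $\tau$; it may be either connected (in which case $\widetilde M \to M$ is a connected two-to-one cover) or the disjoint union of two isometric copies of $M$. In either case, the pullback $\pi^*: H^1(M;\mathbb{R}) \hookrightarrow H^1(\widetilde M;\mathbb{R})$ is injective, so every harmonic $1$-form $\omega$ on $M$ lifts to a $\tau$-invariant harmonic $1$-form $\widetilde\omega := \pi^*\omega$ on $\widetilde M$, producing a $b_1(M)$-dimensional space of such forms.

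Next I would transport the test function construction of Proposition \ref{propomegaN} from $\widetilde M \subset S^{n+1} \subset \mathbb{R}^{n+2}$ down to $M$. Fix an orthonormal basis $\{\theta_{ij}\}_{i<j}$ of $\Lambda^2\mathbb{R}^{n+2}$, let $\widetilde N$ be a unit normal of $\widetilde M$ in $S^{n+1}$, and put $u_{ij} := \langle \widetilde N \wedge \widetilde\omega^\sharp, \theta_{ij}\rangle$. The critical ingredient is parity: because $\tau$ is the restriction of $v \mapsto -v$ on $\mathbb{R}^{n+2}$, a direct computation shows that $\widetilde\omega^\sharp$ is $\tau$-odd as an ambient vector field. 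Choosing the sign of $\widetilde N$ to be $\tau$-odd when $M$ is two-sided in $\mathbb{RP}^{n+1}$ and $\tau$-even when $M$ is one-sided, each $u_{ij}$ lands in the subspace of $L^2(\widetilde M)$ of a definite parity — invariant functions in the two-sided case, anti-invariant functions in the one-sided case. By the discussion of Section 2.3 and Remark \ref{rmkoneside}, this is precisely the parity subspace on which the restricted spectrum of $J_{\widetilde M}$ computes $\text{index}(M)$.

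With this set-up in place, I would mimic the proof of Proposition \ref{propmain} on $\widetilde M$, confined to the relevant parity subspace. Let $k = \text{index}(M)$ and let $\phi_1,\dots,\phi_k$ be the corresponding eigenfunctions of $J_{\widetilde M}$ (which automatically have the correct parity since $J_{\widetilde M}$ commutes with $\tau$). Supposing for contradiction that $b_1(M) > \binom{n+2}{2} k$, the linear map
\[
H^1(M;\mathbb{R}) \longrightarrow \mathbb{R}^{\binom{n+2}{2} k}, \qquad \omega \longmapsto \Big[\int_{\widetilde M} u_{ij}(\omega)\,\phi_p\, d\widetilde M\Big]_{i<j,\, p},
\]
has non-trivial kernel, producing a non-zero $\omega$ for which all the $u_{ij}$ are $L^2$-orthogonal to $\phi_1,\dots,\phi_k$ within the appropriate parity subspace. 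The variational characterisation of eigenvalues inside that subspace then gives $\sum_{i<j} Q_{\widetilde M}(u_{ij},u_{ij}) \geq \lambda_{k+1}^{\text{par}} \int_{\widetilde M} |\widetilde\omega|^2\, d\widetilde M \geq 0$. On the other hand, Proposition \ref{propomegaN} applied on $\widetilde M \subset S^{n+1}$ together with the sphere computation already executed in the proof of Theorem \ref{thmsavo} yields $\sum_{i<j} Q_{\widetilde M}(u_{ij},u_{ij}) = -(2n-2)\int_{\widetilde M} |\widetilde\omega|^2\, d\widetilde M$, which is strictly negative for $n \geq 2$, a contradiction. Therefore $b_1(M) \leq \binom{n+2}{2}\,\text{index}(M)$, which is the claim.

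The main obstacle I foresee is the parity book-keeping: one must verify that the sign choices for $\widetilde N$ truly place each $u_{ij}$ in exactly the parity subspace of $L^2(\widetilde M)$ whose restricted Jacobi spectrum equals the Morse spectrum of $M$, and that this is consistent in both the connected and disconnected cases of $\widetilde M$ as well as the two-sided versus one-sided cases of $M$. Once this is in place, the pointwise curvature identity from the $S^{n+1}$ case transfers verbatim to $\widetilde M$ and delivers the bound.
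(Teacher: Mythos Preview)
Your proposal is correct and follows essentially the same route as the paper: lift $M$ to its antipodal preimage $\tilde M\subset S^{n+1}$, pull back harmonic one-forms, and invoke the sphere computation of Proposition~\ref{propomegaN} to make the sum $\sum_{i<j}Q(u_{ij},u_{ij})$ strictly negative. The only cosmetic difference is that the paper observes (in the two-sided case) that the $\tau$-even test functions $\tilde u_{ij}$ descend to genuine functions $u_{ij}$ on $M$ and then runs the contradiction argument of Proposition~\ref{propmain} directly on $M$, whereas you stay on $\tilde M$ and work inside the appropriate parity subspace of $L^{2}(\tilde M)$; since that parity subspace is canonically $L^{2}(M)$, the two arguments are equivalent. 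Your parity bookkeeping is right: $\tilde\omega^{\sharp}$ is always $\tau$-odd as an $\mathbb{R}^{n+2}$-valued map, and the normal $\tilde N$ is forced (not merely chosen) to be $\tau$-odd when $M$ is two-sided and $\tau$-even when $M$ is one-sided, which lands each $u_{ij}$ in exactly the subspace whose restricted Jacobi spectrum computes $\mathrm{index}(M)$.
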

	\begin{proof}
		\indent There is an obvious one-to-one correspondence between immersed minimal hypersurfaces $M^n$ of $\mathbb{RP}^{n+1}$ and immersed minimal hypersurfaces $\tilde{M}^{n}$ of $S^{n+1}$ that are invariant under the antipodal map (all embedded minimal hypersurfaces of $S^{n+1}$ are connected). $\tilde{M}^n$ is a two-to-one cover of $M^n$. \\
		\indent If $M^n$ is a closed embedded two-sided hypersurface in $\mathbb{RP}^{n+1}$ with unit normal field $N$ (or is the two-sided cover of a closed embedded one-sided minimal hypersurface), then $\tilde{M}^{n}$ is two-sided in $S^{n+1}$ with unit normal field $\tilde{N}$ such that 
		\begin{equation*}
		\tilde{N}(-x) = -\tilde{N}(x) \quad \text{for all} \quad x\in \tilde{M}^{n}. 
		\end{equation*}
		\indent Given any harmonic one-form $\omega$ on $M^n$, its pull-back $\tilde{\omega}$ in $\tilde{M}^{n}$ is also harmonic, and such that $\tilde{\omega}(-x) = -\tilde{\omega}(x)$ for all $x\in \tilde{M}^n$. For these forms, the coodinate functions
		\begin{equation*}
		\tilde{u}_{ij} = \langle\tilde{N}\wedge \tilde{\omega}^{\sharp}, \theta_{ij}\rangle, \quad i,j = 1,\ldots n+2,\, i<j,
		\end{equation*}
		\noindent satisfy 
		\begin{equation*}
		\tilde{u}_{ij}(-x) = \tilde{u}_{ij}(x).
		\end{equation*}
		\indent Therefore there are well-defined functions $u_{ij}$ on $M^n$ whose lifts to $\tilde{M}^n$ are precisely $\tilde{u}_{ij}$. Since the projection of $S^{n+1}$ on $\mathbb{RP}^{n+1}$ is a local isometry, when the functions $u_{ij}$ are plugged in the index form of $M^n$ one obtains the same as when the functions $\tilde{u}_{ij}$ are plugged in the index form of $\hat{M}^n$, that is
		\begin{equation*}
		\sum_{i<j}^{n+2}Q^{M}(u_{ij},u_{ij}) = -(2n-2)\int_{M}|\omega|^2dM < 0
		\end{equation*}
		\noindent by the computations in Theorem \ref{thmsavo}. Applying the general method (as in the proof of Theorem A), we conclude that every embedded minimal hypersurface of $\mathbb{RP}^{n+1}$ is such that
		\begin{equation*}
		index(M) \geq \frac{2}{(n+2)(n+1)} b_{1}(M).
		\end{equation*}
		\indent
	\end{proof}
	
	\begin{rmk}
		It is not possible to obtain an extra $n+2$ in the lower bound for the index as in the case of spheres because $\tilde{N}$ is such that $\tilde{N}(-x)=-\tilde{N}(x)$ for all $x$ in $\tilde{M}^{n}$, which means that the coordinates of $\tilde{N}$ do not descend to $M^n$ to produce test functions for the index form.
	\end{rmk}
	
	\subsection{Complex and quaternionic projective spaces, and the Cayley plane} \label{subs:rankone}
	
	In this section we want to consider $(\mathcal{N}^{n+1},g)$ to be one of the projective spaces:
	\begin{equation*}
	\mathcal{N}^{n+1}  = \mathbb{CP}^{m} \, (2m=n+1), \, \mathbb{HP}^{p} \, (4p=n+1), \, Ca\mathbb{P}^2 \, (16=n+1),
	\end{equation*} 
	\noindent endowed with their standard Riemannian metrics that are symmetric, i.e., whose Riemman curvature tensor is parallel. In the case of $\mathbb{CP}^m$ this is just the standard Fubini-Study metric. \\
	\indent Up to normalization, all these metrics have sectional curvatures bounded between $1$ and $4$. Moreover, each of them is Einstein. The following table summarizes the relevant information about the curvature of these spaces:
	\vspace{0.5cm}
	\begin{center}
		\begin{tabular}{ | c | c | c | } 
			\hline 
			$\mathcal{N}^{n+1}$ & $Rm^{\mathcal{N}}(X,Y,X,Y)$, $X,Y$ orthonormal  & $Ric^{\mathcal{N}}$  \\ \hline
			$\mathbb{CP}^{m}$   &                $1 + 3g(X,JY)^2$                     &  $(n+3)g$   \\ \hline
			$\mathbb{HP}^{p}$   &     $1 + 3g(X,IY)^2 + 3g(X,JY)^2 + 3g(X,KY)^2$      &  $(n+9)g$   \\ \hline
			$Ca\mathbb{P}^2$    &             bounded between 1 and 4                 &   $36g  $   \\ \hline
		\end{tabular}
	\end{center}
	\vspace{0.5cm}
	
	\indent In the above formulae for the sectional curvatures, $J$ denote the compatible complex structure of $\mathbb{CP}^m$ and $I,J,K$ denote the compatible complex structures of $\mathbb{HP}^{m}$ that satisfy the standard operational rules of the quaternions. \\
	\indent There exists a beautiful family of isometric embeddings of these spaces into an  Euclidean space. These embeddings generalise the standard Veronese embeddings of $\mathbb{RP}^n$ in the space of symmetric $(n+1)\times (n+1)$ matrices over $\mathbb{R}$. We refer the reader to \cite{CheA} (and the references threrein) for a detailed account of these embeddings, which enjoy many other nice geometric properties.\\
	\indent Following \cite{CheA}, let us give a brief description of these embeddings. Denote by $\mathbb{F}$ either the field of complex numbers $\mathbb{C}$ or the division algebra of quaternions $\mathbb{H}$. Consider the action of the set of unit elements in $\mathbb{F}$ on the unit sphere in $\mathbb{F}^{m+1}$ by multiplication (from the right). The projective spaces $\mathbb{FP}^m$ are the quotient spaces obtained by identifying points in the same orbit of this action. \\
	\indent  Let $M(m+1;\mathbb{F})$ denote the set of all $(m+1)\times (m+1)$ matrices with coefficients in $\mathbb{F}$. A Hermitian matrix is a matrix $A$ in $M(m+1;\mathbb{F})$ that coincides with its transpose conjugate, $\overline{A}^{t}$. The set $H(m+1;\mathbb{F})$ of all Hermitian matrices can be seen as an Euclidean space when endowed with the metric 
	\begin{equation*}
	\langle A,B\rangle =  \frac{1}{2}Re(tr (AB)).
	\end{equation*}
	\indent This space has dimension $(m+1)^2$ if $\mathbb{F}=\mathbb{C}$ or $(2m+1)(m+1)$ if $\mathbb{F}=\mathbb{H}$. \\
	\indent The map 
	\begin{equation*}
	[z_{1} : \ldots : z_{m+1}] \in \mathbb{FP}^{m} \mapsto \left[ z_{i}\overline{z}_{j} \right] _{ij} \in H(m+1;\mathbb{F})
	\end{equation*}
	\noindent is well-defined and gives an embedding of $\mathbb{FP}^m$ into $H(m+1;\mathbb{F})$. The image of this map is the set 
	\begin{equation*}
	\{ A \in H(m+1;\mathbb{F});\, A^{2} = A, tr A = 1\},
	\end{equation*}
	\noindent and the induced metric is precisely the canonical metric on $\mathbb{FP}^{m}$ described above. \\
	\indent The case of the Cayley plane $Ca\mathbb{P}^2$ has a similar algebraic characterisation (see \cite{CheA}). It can be seen as an embedded hypersurface of an Euclidean space of dimension 27. \\
	\indent For the discussion that follows, the main property we need to know, and that is shared by all of these embeddings, is that the second fundamental form $II$ satisfies
	\begin{equation}\label{eqemb}
	\langle II(X,X),II(X,X)\rangle = 4 \quad \text{for all unit} \quad X\in \mathcal{X}(\mathcal{N}),
	\end{equation}
	\noindent see equations (3.2) and (5.1) in \cite{CheA}. It follows by polarization of this identity that for every pair of orthogonal unit vectors fields $X$ and $Y$ that are tangent to $\mathcal{N}^{n+1}$,
	\begin{equation} \label{equm}
	\langle II(X,X),II(Y,Y)\rangle + 2|II(X,Y)|^2 = 4.
	\end{equation}
	\indent Combining (\ref{equm}) and the Gauss equation (\ref{eqGaussN}), one obtains the following formula, valid for all pairs of orthogonal vectors $X$ and $Y$ that are tangent to $\mathcal{N}^{n+1}$:
	\begin{equation} \label{eqdois}
	|II(X,Y)|^2 = \frac{1}{3}(4- Rm^{\mathcal{N}}(X,Y,X,Y)).
	\end{equation}
	\indent Given $\omega$ a harmonic one-form on a given closed embedded oriented minimal hypersurface $M^n$ of $\mathcal{N}^{n+1}$, it is possible to compute that, at any given point $p$ in $M^{n}$, if $\{e_k\}$ is an arbitrary orthonormal basis of $T_{p}M$,
	\begin{equation} \label{eqauxp1}
	\left(\sum_{k=1}^{n}|II(e_k,N)|^2 - Ric^{\mathcal{N}}(N,N)\right)|\omega|^2 = \frac{4}{3}( n - K)|\omega|^2
	\end{equation}
	\noindent where $K$ is the Einstein constant of $(\mathcal{N}^{n+1},g)$ given in third column of Table 1. Moreover, at points where $\omega$ does not vanish it is possible to choose an orthonormal basis $\{e_k\}$ of $T_pM$ such that $e_1=\omega^{\sharp}/|\omega^{\sharp}|$. Thus, by \eqref{eqemb} and \eqref{eqdois}
	\begin{multline} \label{eqauxp2}
	\sum_{k=1}^{n}\left(|II(e_k,\omega^{\sharp})|^2 - Rm^{\mathcal{N}}(e_k,\omega^{\sharp},e_k,\omega^{\sharp})\right) \\
	= \left(|II(e_1,e_1)|^2 + \frac{4}{3}\sum_{k=2}^{n}(1-Rm^{\mathcal{N}}(e_k,e_1,e_k,e_1)\right) |\omega|^2 \\
	= \frac{4}{3}(n+2 - K + Rm^{\mathcal{N}}(N,e_1,N,e_1))|\omega|^2.
	\end{multline}
	\indent Since $Rm^{\mathcal{N}}(N,e_1,N,e_1)$ is bounded between $1$ and $4$, from (\ref{eqauxp1}) and (\ref{eqauxp2}) it follows that 
	\begin{multline*}
	 \sum_{k=1}^{n}|II(e_k,\omega^{\sharp})|^2 + \sum_{k=1}^{n}|II(e_k,N)|^2|\omega|^{2} \\
	 - \sum_{k=1}^{n} Rm^{\mathcal{N}}(e_k,\omega^{\sharp},e_k,\omega^{\sharp}) - Ric^{\mathcal{N}}(N,N)|\omega|^2 \\  \leq  \frac{8}{3}(n+3 - K)|\omega|^2.
	\end{multline*}
	\indent Notice that equality holds if and only if $Rm^{\mathcal{N}}(N,\omega^{\sharp},N,\omega^{\sharp}) = 4|\omega|^2$ on all points of $M^{n}$. Reading the values of $K$ from the above Table, it is then immediate to check that 
	\begin{multline*}
	\int_{M} \left[\sum_{k=1}^{n}|II(e_k,\omega^{\sharp})|^2 + \sum_{k=1}^{n}|II(e_k,N)|^2|\omega|^{2}\right]dM \\ - \int_{M} \left[\sum_{k=1}^{n} Rm^{\mathcal{N}}(e_k,\omega^{\sharp},e_k,\omega^{\sharp}) + Ric^{\mathcal{N}}(N,N)|\omega|^2\right] dM \leq 0,
	\end{multline*}
	\noindent where equality can only happen when $\mathcal{N}^{n+1}$ is the complex projective plane $\mathbb{CP}^{m}$ and the harmonic form $\omega$ on $M^{n}$ is such that $Rm^{\mathcal{N}}(N,\omega^{\sharp},N,\omega^{\sharp}) = |\omega|^2+3g(\omega^{\sharp},JN)^2 = 4|\omega|^2$. In turn, this implies at once (by virtue of the Cauchy-Schwartz inequality) that there exists a smooth function $f:M\to\mathbb{R}$ such that $\omega^{\sharp} = fJN.$ We shall see in Appendix \ref{app:borderline} that, in this setting, necessarily $f=0$ identically on $M^n$ (see Proposition A.0 for a precise statement), and hence $\omega$ must be the trivial one-form. As a result, the inequality in question must be strict for all non-zero harmonic forms. \\

	\indent Combining all those facts, we have shown that it is possible to apply Theorem A to obtain the following results.
	
	\begin{thm}\label{thm:compl}
		Let $M^{n}$ be a closed embedded minimal hypersurface of the complex projective space $\mathbb{CP}^{m}$, $2m=n+1$. Then
		\begin{equation*}
		index(M) \geq \frac{2}{m(m+2)(m+1)^{2}} b_{1}(M). 
		\end{equation*}
	\end{thm}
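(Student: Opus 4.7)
The plan is to invoke Proposition \ref{propmain} with $\eta=0$ and $\mathcal{V}$ equal to the full space of harmonic one-forms on $M^n$, rather than to apply Theorem A directly. The reason for this choice is that, as observed in the discussion preceding the statement, the borderline configuration $X = fJN$ prevents strict inequality from holding for every tangent vector field, so Theorem A as stated cannot be used verbatim; however, for harmonic forms the borderline case will turn out to be empty.

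First, I would record the setup. Since $\mathbb{CP}^m$ is simply-connected, any closed embedded hypersurface $M^n$ is two-sided and orientable, so $\dim \mathcal{V}=b_1(M)$. The standard Hermitian embedding identifies $\mathbb{CP}^m$ isometrically with a submanifold of $H(m+1;\mathbb{C})\cong \mathbb{R}^{(m+1)^2}$, so $d=(m+1)^2$ and $\tfrac{2}{d(d-1)} = \tfrac{2}{m(m+2)(m+1)^2}$, matching the claimed constant. Next, I would combine the pointwise computations already carried out in this subsection: formula \eqref{eqauxp1} with $K=n+3$, and formula \eqref{eqauxp2} together with the bound $1\le Rm^{\mathcal{N}}(N,e_1,N,e_1)\le 4$ for the complex projective space. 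Integrating over $M$, these yield the non-strict version of the hypothesis of Proposition \ref{propmain}, namely
\begin{equation*}
\int_{M} \Big[\sum_{k=1}^{n}|II(e_k,\omega^{\sharp})|^2 + \sum_{k=1}^{n}|II(e_k,N)|^2|\omega|^{2}\Big]dM - \int_{M}\Big[\sum_{k=1}^{n} Rm^{\mathcal{N}}(e_k,\omega^{\sharp},e_k,\omega^{\sharp}) + Ric^{\mathcal{N}}(N,N)|\omega|^2\Big]dM \le 0,
\end{equation*}
for every harmonic one-form $\omega$, with equality forcing $Rm^{\mathcal{N}}(N,\omega^{\sharp},N,\omega^{\sharp})=4|\omega|^2$ at every point where $\omega$ does not vanish.

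The main obstacle is exactly to upgrade $\le$ to $<$ for every nonzero $\omega\in \mathcal{V}$. By the curvature identity $Rm^{\mathcal{N}}(X,Y,X,Y)=1+3g(X,JY)^2$ and Cauchy--Schwarz, the equality case forces $\omega^{\sharp}=f\,JN$ pointwise for some smooth function $f\colon M\to\mathbb{R}$. At this point I would invoke Proposition A.0 from Appendix \ref{app:borderline}, which asserts that a harmonic one-form $\omega$ on a closed embedded minimal hypersurface $M^n\subset \mathbb{CP}^m$ with $\omega^{\sharp}=fJN$ must satisfy $f\equiv 0$; this is where the interaction between the K\"ahler structure of $\mathbb{CP}^m$, the minimality of $M^n$, and the harmonicity of $\omega$ is exploited, and is the technical heart of the argument. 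Consequently, the inequality above is strict whenever $\omega\neq 0$.

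Finally, applying Proposition \ref{propmain} with $\eta=0$ and $q=\dim\mathcal{V}=b_1(M)$ gives
\begin{equation*}
\mathrm{index}(M) \ge \frac{2}{d(d-1)}\,b_1(M) = \frac{2}{m(m+2)(m+1)^2}\,b_1(M),
\end{equation*}
which is the desired estimate.
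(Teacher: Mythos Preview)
Your proposal is correct and follows the paper's argument essentially verbatim: the same isometric embedding into $H(m+1;\mathbb{C})\cong\mathbb{R}^{(m+1)^2}$, the same pointwise computations \eqref{eqauxp1}--\eqref{eqauxp2}, and the same appeal to Proposition A.0 to eliminate the borderline case $\omega^{\sharp}=fJN$. Your choice to invoke Proposition \ref{propmain} directly rather than Theorem A is in fact slightly more precise than the paper's phrasing, since Theorem A is stated for \emph{all} non-zero vector fields $X$ whereas the borderline case is only excluded for harmonic $\omega$; the paper's ``apply Theorem A'' really means ``apply the proof of Theorem A'', which is exactly what you do.
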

	
	\begin{thm}\label{thm:quat}
		Let $M^{n}$ be a closed embedded minimal hypersurface of the quaternionic projective space $\mathbb{HP}^{p}$, $4p=n+1$. Then
		\begin{equation*}
		index(M) \geq \frac{2}{(2p+3)(2p+1)(p+1)p} b_{1}(M).
		\end{equation*}

	\end{thm}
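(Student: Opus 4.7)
The strategy is to apply Theorem A directly to the Veronese-type isometric embedding of $\mathbb{HP}^p$ set up in the preceding subsection. First I would fix the ambient Euclidean space to be the space $H(p+1;\mathbb{H})$ of Hermitian quaternionic matrices endowed with the inner product $\langle A,B\rangle=\frac{1}{2}Re(tr(AB))$, which has dimension $d=(2p+1)(p+1)$ by the dimension count already recorded. Since $\mathbb{HP}^p$ is simply connected, every closed embedded hypersurface $M^n$ (with $n=4p-1$) is automatically two-sided and orientable, so no subtleties regarding one-sided immersions arise and Theorem A applies verbatim.

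The main task is to verify that the pinching hypothesis of Theorem A holds strictly for every non-zero vector field $X$ on $M$. This is essentially free once the pointwise analysis of Subsection \ref{subs:rankone} is invoked: the identities \eqref{eqemb}, \eqref{equm} and the consequence \eqref{eqdois} combined with \eqref{eqauxp1}--\eqref{eqauxp2} give the pointwise bound
\begin{equation*}
\sum_{k=1}^{n}|II(e_k,X)|^{2} + \Bigl(\sum_{k=1}^{n}|II(e_k,N)|^{2}\Bigr)|X|^{2} - \sum_{k=1}^{n} Rm^{\mathcal{N}}(e_k,X,e_k,X) - Ric^{\mathcal{N}}(N,N)|X|^{2} \leq \tfrac{8}{3}(n+3-K)|X|^{2},
\end{equation*}
where $K$ is the Einstein constant of the ambient manifold. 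Reading from the curvature table, for $\mathbb{HP}^p$ one has $K=n+9$, so the right-hand side becomes $-16|X|^{2}$, which is strictly negative whenever $X\neq 0$. In particular, unlike the complex projective case, there is no borderline equality case to rule out: the strict negativity is automatic from $K>n+3$, so Appendix~\ref{app:borderline} is not needed here.

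Integrating this strict pointwise inequality over $M$ shows that the hypothesis of Theorem A is satisfied for every non-zero vector field on $M$. Theorem A then yields
\begin{equation*}
index(M) \;\geq\; \frac{2}{d(d-1)}\,b_{1}(M).
\end{equation*}
The final step is a direct arithmetic simplification: with $d=(2p+1)(p+1)$ we compute $d-1=2p^{2}+3p=p(2p+3)$, hence $d(d-1)=p(p+1)(2p+1)(2p+3)$, which gives the announced constant $\frac{2}{(2p+3)(2p+1)(p+1)p}$.

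The main obstacle one might have expected — a borderline case analogous to the complex projective one — does not materialize here precisely because the Einstein constant of $\mathbb{HP}^p$ is strictly larger than $n+3$. Consequently the proof is essentially a clean assembly of the general scheme of Theorem A with the Veronese embedding and the Einstein constant of $\mathbb{HP}^p$, and the only genuine work lies in the pointwise curvature identities already carried out in Subsection \ref{subs:rankone}.
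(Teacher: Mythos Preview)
Your proposal is correct and follows essentially the same route as the paper: the computations \eqref{eqauxp1}--\eqref{eqauxp2} together with the Einstein constant $K=n+9$ for $\mathbb{HP}^p$ give a strictly negative pointwise bound, so Theorem~A applies with $d=(2p+1)(p+1)$ and no borderline analysis is needed. The arithmetic $d(d-1)=p(p+1)(2p+1)(2p+3)$ is exactly what the paper has in mind when it states the theorem without further comment.
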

	
	\begin{thm}\label{thm:Cay}
		Let $M^{n}$ be a closed embedded minimal hypersurface of the Cayley plane $Ca\mathbb{P}^{2}$. Then
		\begin{equation*}
		index(M) \geq \frac{1}{351} b_{1}(M).
		\end{equation*}
	\end{thm}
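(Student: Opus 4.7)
The plan is to reduce the statement to a direct application of Theorem A with $d=27$, relying on the pointwise master estimate already established in Subsection \ref{subs:rankone}. First, I would record the three relevant constants for the Cayley plane: the intrinsic dimension is $n=15$, the Einstein constant is $K=36$ (third row of Table 1), and the Veronese-type embedding from \cite{CheA} realises $Ca\mathbb{P}^2$ as an isometrically embedded submanifold of the $27$-dimensional Euclidean space $H(3;\mathbb{O})$, so $d=27$. The key structural property of this embedding, namely identity \eqref{eqemb}, together with its polarised form \eqref{equm} and the consequence \eqref{eqdois}, is used precisely as in the complex and quaternionic cases.

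Next, I would invoke the pointwise estimate derived in Subsection \ref{subs:rankone} (which only uses \eqref{eqemb}, \eqref{eqdois}, the Einstein condition, and the sectional curvature bound $1\le Rm^{\mathcal N}\le 4$, all valid for $Ca\mathbb{P}^2$): for every harmonic one-form $\omega$ on a closed embedded minimal $M^{15}\subset Ca\mathbb{P}^2$,
\[
\sum_{k=1}^{15}|II(e_k,\omega^{\sharp})|^2 + \sum_{k=1}^{15}|II(e_k,N)|^2|\omega|^2 - \sum_{k=1}^{15}Rm^{\mathcal{N}}(e_k,\omega^{\sharp},e_k,\omega^{\sharp}) - Ric^{\mathcal{N}}(N,N)|\omega|^2 \leq \tfrac{8}{3}(n+3-K)|\omega|^2.
\]
Substituting $n=15$ and $K=36$, the right-hand side becomes $-48\,|\omega|^2$, which is strictly negative wherever $\omega\neq 0$. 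Hence the pinching hypothesis of Theorem A is satisfied with strict inequality for every non-zero tangent vector field (in particular for $X=\omega^{\sharp}$ dual to any non-trivial harmonic one-form).

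A pleasant simplification, compared with the complex projective case, is that no borderline analysis is needed: the gap $n+3-K=-18$ is far from zero because the sectional curvature of $Ca\mathbb{P}^2$ never exceeds $4$, so the inequality is robust and there is no equality case analogous to $\omega^{\sharp}=fJN$ to rule out via an \textsl{ad hoc} argument as in Appendix \ref{app:borderline}. Moreover, $Ca\mathbb{P}^2$ is a simply-connected symmetric space, so by the discussion in Section 2.1 every closed embedded hypersurface is automatically two-sided and orientable; no odd-function restriction or Remark \ref{rmkoneside} is required.

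Finally, applying Theorem A with $d=27$ yields
\[
\operatorname{index}(M)\ \geq\ \frac{2}{d(d-1)}\,b_1(M)\ =\ \frac{2}{27\cdot 26}\,b_1(M)\ =\ \frac{1}{351}\,b_1(M),
\]
which is the claimed estimate. The only substantive point to verify is that the embedding constants quoted from \cite{CheA} (namely $d=27$ and the identity \eqref{eqemb}) and the Einstein constant in Table 1 are correct; given these, the argument is a clean substitution into the general machinery of Section 3, and there is no real obstacle.
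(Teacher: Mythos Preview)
Your proposal is correct and follows exactly the paper's approach: the pointwise estimate $\frac{8}{3}(n+3-K)|\omega|^2$ derived in Subsection~\ref{subs:rankone} specialises with $n=15$, $K=36$ to a strictly negative bound, so Theorem~A applies with $d=27$ (and, as you note, no borderline analysis is needed since $n+3-K=-18<0$). This is precisely how the paper obtains Theorem~\ref{thm:Cay}.
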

	
	\subsection{Product of the circle and unit spheres}\label{subs:circle} In this subsection, we consider $(\mathcal{N}^{n+1},g)$ to be the Riemannian product of the unit circle $S^1$ with the unit round sphere $S^{n}$. As these spaces have non-negative Ricci curvature, this case goes beyond the situations described in the conjecture stated in the Introduction. However, it is still possible to obtain similar bounds for the Morse index of their embedded hypersurfaces. \\
	\indent The case of closed embedded minimal surfaces in $S^{1}\times S^2$ was already studied in detail by Urbano in \cite{Urb2} (see  Theorem 4.8 in \cite{Urb2} for the precise statement of his result). We remark that the proof in \cite{Urb2} used the coordinates of harmonic one-forms as test functions for the index form, i.e., the proof used formula (\ref{eqauxpropomega}) specialized to the case of the canonical product embedding of $S^{1}\times S^{2}$ in $\mathbb{R}^5$. \\
	\indent Thus, in the sequel we restrict ourselves to higher dimensions.
	\begin{thm} \label{thmprod}
		Let $M^{n}$ be a closed embedded minimal hypersurface of $S^{1}\times S^{n}$, $n\geq 3$. Then
		\begin{equation*}
		index(M) \geq \frac{2}{(n+3)(n+2)} b_{1}(M).
		\end{equation*}
	\end{thm}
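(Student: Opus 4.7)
The plan is to apply Theorem A to the standard product embedding $\mathcal{N}^{n+1} = S^1 \times S^n \hookrightarrow \mathbb{R}^2 \times \mathbb{R}^{n+1} = \mathbb{R}^{n+3}$. Since $d = n+3$, the resulting constant $2/(d(d-1)) = 2/((n+3)(n+2))$ matches the one in the statement, so all that remains is to verify the pointwise inequality in the hypothesis of Theorem A for every nonzero tangent vector $X$ on $M^n$.

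First I would set up notation adapted to the product structure. Let $\partial_\theta$ denote the unit tangent to the $S^1$ factor, $\pi_1, \pi_2$ the orthogonal projections onto the two tangent factors, $\{e_1, \ldots, e_n\}$ a local orthonormal frame on $M^n$ with unit normal $N$, and set $f_k = \langle e_k, \partial_\theta\rangle$, $f_N = \langle N, \partial_\theta\rangle$, $\xi = \langle X, \partial_\theta\rangle$, so that $\sum_k f_k^2 + f_N^2 = 1$. Since $X$ is tangent to $M$ and thus orthogonal to $N$, Cauchy--Schwarz yields the key bound $\xi^2 \leq (1-f_N^2)|X|^2$.

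Next I would compute each term in the inequality using the explicit product structure. The second fundamental form of $\mathcal{N}^{n+1}$ in $\mathbb{R}^{n+3}$ acts on pairs $(U_1, U_2), (V_1, V_2)$ by $-\langle U_1, V_1\rangle p_1 - \langle U_2, V_2 \rangle p_2$, where $p_1, p_2$ are the outward unit normals to the two sphere factors. A direct expansion then gives
\begin{equation*}
\sum_{k=1}^{n} |II(e_k, X)|^2 = |X|^2 - 2 f_N^2 \xi^2, \qquad \sum_{k=1}^{n} |II(e_k, N)|^2 = 2 f_N^2 (1 - f_N^2).
\end{equation*}
Since $S^1$ is flat, $Rm^{\mathcal{N}}$ reduces to the round $S^n$ curvature evaluated on $\pi_2$-components, which yields
\begin{equation*}
\sum_{k=1}^{n} Rm^{\mathcal{N}}(e_k, X, e_k, X) = (n-2+f_N^2)|X|^2 - (n-2)\xi^2, \qquad Ric^{\mathcal{N}}(N,N) = (n-1)(1-f_N^2).
\end{equation*}
Combining these, the difference between the left- and right-hand sides of the inequality in Theorem A becomes, pointwise,
\begin{equation*}
\bigl[2(n-2) - n f_N^2 + 2 f_N^4\bigr]|X|^2 - (n - 2 - 2 f_N^2)\xi^2.
\end{equation*}

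The final step is a short case analysis to verify strict positivity. When $n - 2 - 2 f_N^2 \geq 0$ (which covers all $n \geq 4$, and the case $n=3$ with $f_N^2 \leq 1/2$), substituting the bound $\xi^2 \leq (1 - f_N^2)|X|^2$ into the $\xi^2$-term makes the expression collapse cleanly to $(n-2)|X|^2 > 0$. When $n = 3$ and $f_N^2 > 1/2$ the $\xi^2$-term contributes nonnegatively (since then $-(n-2-2f_N^2) > 0$), so discarding it reduces matters to showing that $2 t^2 - 3 t + 2 > 0$ for $t = f_N^2 \in (1/2, 1]$; this follows immediately from the fact that this quadratic has discriminant $9 - 16 < 0$. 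The main technical obstacle I anticipate is simply the bookkeeping in the second-fundamental-form and curvature computations; the case analysis itself is essentially painless once the Cauchy--Schwarz bound on $\xi^2$ is in hand. Once the pointwise inequality is verified, Theorem A delivers the claimed index estimate.
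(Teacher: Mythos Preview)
Your proposal is correct and follows essentially the same route as the paper: apply Theorem A to the standard product embedding $S^1\times S^n\hookrightarrow\mathbb{R}^{n+3}$ and verify the pointwise inequality directly. The only difference is organizational: where the paper introduces an adapted orthonormal frame and two angles $\theta,\varphi$ and then simplifies a trigonometric expression $q(\theta,\varphi)$, you work with the scalars $f_N=\langle N,\partial_\theta\rangle$ and $\xi=\langle X,\partial_\theta\rangle$ and use the Cauchy--Schwarz bound $\xi^2\le(1-f_N^2)|X|^2$ together with a short case split; your observation that the main case collapses exactly to $(n-2)|X|^2$ is a slightly cleaner endgame than the paper's trigonometric reduction.
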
 
	\begin{proof}
		We consider the standard embedding of $\mathcal{N}^{n+1}=S^{1}\times S^{n}$ into $\mathbb{R}^{n+3}=\mathbb{R}^2\times \mathbb{R}^{n+1}$. It is a product embedding of totally umbilic hypersurfaces. The tangent space of $\mathcal{N}^{n+1}$ decomposes as $TS^1\oplus TS^{n}$. Let $\pi_{1}: T\mathcal{N} \rightarrow TS^1$, $\pi_{2}:T\mathcal{N} \rightarrow TS^n$ denote the corresponding projections. \\
		\indent Since $S^1$ is one-dimensional and $S^n$ has the constant sectional curvatures equal to one, the Riemann curvature tensor of $(\mathcal{N}^{n+1},g)$ is given by
		\begin{equation*}
		Rm^{\mathcal{N}}(X,Y,X,Y) = |\pi_{2}(X)|^2||\pi_{2}(Y)|^2 - \langle\pi_{2}(X),\pi_{2}(Y)\rangle^2
		\end{equation*}
		\noindent for all $X,Y\in \mathcal{X}(\mathcal{N})$. In particular, 
		\begin{equation*}
		Ric^{\mathcal{N}}(X,X) = (n-1)|\pi_{2}(X)|^2 \quad \text{for all} \quad X\in \mathcal{X}(\mathcal{N})
		\end{equation*}
		\indent The second fundamental form of $\mathcal{N}^{n+1}$ in $\mathbb{R}^{n+3}$ is given by
		\begin{align*}
		|II(X,Y)|^2 & = \langle\pi_{1}(X),\pi_{1}(Y)\rangle^2 + \langle\pi_{2}(X),\pi_{2}(Y)\rangle^2 \\
		&  = \langle X,\pi_{1}(Y)\rangle^2 + \langle X,\pi_{2}(Y)\rangle^2.
		\end{align*}
		\indent Let $M^n$ be an oriented minimal hypersurface $M^n$ of $\mathcal{N}^{n+1}$ (or the two-sided cover of a  one-sided hypersurface) and $\omega$ be a harmonic one-form on $M^n$. On one hand, we have
		\begin{align} \label{prodeqA}
		\nonumber \sum_{k=1}^{n} |II(e_{k},N)|^2 & = \sum_{k=1}^{n}\langle e_{k},\pi_{1}(N)\rangle^2 + \sum_{k=1}^{n}\langle e_{k},\pi_{2}(N)\rangle^2 \\
		\nonumber                                & = |\pi_{1}(N)|^2 - \langle N,\pi_{1}(N)\rangle^2 + |\pi_{2}(N)|^2 - \langle N,\pi_{2}(N)\rangle ^2 \\
		\nonumber                                & = |\pi_{1}(N)|^2-|\pi_{1}(N)|^4 + |\pi_{2}(N)|^2-|\pi_{2}(N)|^4 \\
		& = 1 - |\pi_{1}(N)|^4 - |\pi_{2}(N)|^4 ,
		\end{align}
		\noindent and, analogously,
		\begin{equation} \label{prodeqB}
		\sum_{k=1}^{n} |II(e_{k},\omega^{\sharp})|^2  = |\omega|^2 - \langle N,\pi_{1}(\omega^{\sharp})\rangle^2 - \langle N,\pi_{2}(\omega^{\sharp})\rangle^2.
		\end{equation}
		\indent On the other hand, a similar computation gives
		\begin{align} \label{prodeqC}
		\nonumber \sum_{k=1}^{n}Rm^{\mathcal{N}}(e_{k},\omega^{\sharp},e_{k},\omega^{\sharp})  & =
		\sum_{k=1}^{n}|\pi_{1}(e_{k})|^2|\pi_{1}(\omega^{\sharp})|^2 + \sum_{k=1}^{n}|\pi_{2}(e_{k})|^2|\pi_{2}(\omega^{\sharp})|^2 \\
		& \quad + \langle N,\pi_{1}(\omega^{\sharp})\rangle^2 + \langle N,\pi_{2}(\omega^{\sharp})\rangle^2 - |\omega^{\sharp}|^2. 
		\end{align}
		\indent In order to proceed with the computation, we choose a convenient orthonormal basis of $T_{p}\mathcal{N}$ at some point $p=(p_{1},p_{2})$ in $S^{1}\times S^n$. Let $w$ denote the unit vector field on $S^1$. The set of vectors on $T_{p_2}S^{n}$ that are orthogonal to $N$ constitute a vector subspace of dimension at least $n-1$. Therefore we can choose an orthonormal basis for $T_{p}\mathcal{N}$ of the form $\{T,f_{1},f_{2},\ldots,f_{n-1},N\}$, where $\{f_{1},f_{2},\ldots,f_{n-1},f_{n}\}$ is an orthonormal basis of $T_{p_{2}}S^{n}$ and the vectors $T$ and $N$ can be written as
		\begin{align*}
		T & = \cos(\theta) w + \sin(\theta) f_{n} \\
		N & = -\sin(\theta)w + \cos(\theta) f_{n}.
		\end{align*}
		\indent  Furthermore, if we let $\varphi\in[0,\pi]$ be the angle between the vectors $\omega^{\sharp}$ and $T$ in $T_{p}M$ we can express (\ref{prodeqA}), (\ref{prodeqB}), (\ref{prodeqC}) and $Ric^{\mathcal{N}}(N,N)$ in terms of the angles $\theta$ and $\varphi$, since in particular 
		\begin{align*}
		|\pi_1(\omega^{\sharp})|^2 &=\cos^2(\theta)\cos^2(\varphi)|\omega|^2, \	|\pi_2(\omega^{\sharp})|^2 =(\sin^2(\varphi)+\sin^2(\theta)\cos^2(\varphi))|\omega|^2
		\end{align*}
		as well as
		\begin{align*}
		\langle\pi_1(N),\pi_1(\omega^{\sharp})\rangle^2 = 
		\langle\pi_2(N),\pi_2(\omega^{\sharp})\rangle^2= \cos^2(\theta)\sin^2(\theta)\cos^2(\varphi)|\omega|^2.
		\end{align*}
		\indent Hence, combining these equations we obtain the following formula (for the \textsl{opposite} of the integrand in \eqref{eqomegaN}):
		\indent 
		\begin{align*}
		& \sum_{k=1}^{n}Rm^{\mathcal{N}}(e_{k},\omega^{\sharp},e_{k},\omega^{\sharp})+  \sum_{k=1}^{n}Rm^{\mathcal{N}}(e_k,N,e_k,N)|\omega|^2\\
		&-\sum_{k=1}^{n}|II(e_k,\omega^{\sharp})|^2-\sum_{k=1}^{n}|II(e_k,N)|^2|\omega|^2 \\
		= & \left((n-1)|\pi_2(N)|^2+|\pi_1(N)|^4+|\pi_2(N)|^4-3\right)|\omega|^2\\
		 &+|\pi_{1}(\omega^{\sharp})|^2 \sum_{k=1}^{n}|\pi_{1}(e_{k})|^2 + |\pi_{2}(\omega^{\sharp})|^2\sum_{k=1}^{n}|\pi_{2}(e_{k})|^2 \\
		  &+2\langle \pi_{1}(N),\omega^{\sharp}\rangle^2+ 2\langle\pi_{2}(N),\omega^{\sharp}\rangle^2 \\
		  = & ((n-1)\cos^2(\theta)+\cos^4(\theta)+\sin^4(\theta)-3)|\omega|^2 \\
			&+ (\cos^2(\theta))|\pi_1(\omega^{\sharp})|^2+ ((n-1)+\sin^2(\theta))|\pi_2(\omega^{\sharp})|^2 \\
			&+4\cos^2(\theta)\sin^2(\theta)\cos^2(\varphi)|\omega|^2 \\
			=&((n-1)\cos^2(\theta)+\cos^4(\theta)+\sin^4(\theta)-3)|\omega|^2\\
			&+(\cos^2(\theta)+2\sin^2(\theta))|\pi_1(\omega^{\sharp})|^2 +((n-1)+\sin^2(\theta))|\pi_2(\omega^{\sharp})|^2 \\
			&+2\cos^2(\theta)\sin^2(\theta)\cos^2(\varphi)|\omega|^2 \\
			=& ((n-2)\cos^2(\theta)+\cos^4(\theta)+\sin^4(\theta)-1+2\cos^2(\theta)\sin^2(\theta)\cos^2(\varphi))|\omega|^2\\
			&+(n-2)(\sin^2(\varphi)+\sin^2(\theta)\cos^2(\varphi))|\omega|^2 \\
			=& (n-2)(\cos^2(\theta)+\sin^2(\theta)\cos^2(\varphi)+\sin^2(\varphi))|\omega|^2\\
			&+(\cos^4(\theta)+\sin^4(\theta)-1+2\cos^2(\theta)\sin^2(\theta)\cos^2(\varphi)) |\omega|^2.
			\end{align*}
		\indent As a result, using the fact that $n\geq 3$ it is sufficient for us to prove that when $\varphi,\theta\in[0,\pi]$
		\begin{multline*}
		q(\theta,\varphi)=\cos^2(\theta)+\sin^2(\theta)\cos^2(\varphi)+\sin^2(\varphi)\\+\cos^4(\theta)+\sin^4(\theta)-1+2\cos^2(\theta)\sin^2(\theta)\cos^2(\varphi)>0.
		\end{multline*}
		\indent Replacing, in the expression for $q$, $\cos^2(\varphi)=1-\sin^2(\varphi)$ and making use of basic trigonometric identities we easily see that in fact
		\begin{align*}
		q(\theta,\varphi)=1+\sin^2(\varphi)\cos^2(\theta)(2\cos^2(\theta)-1)
		\end{align*}
		so that patently $q\geq 1$ for $\theta\in [0,\pi/4]\cup[3\pi/4,\pi]$ while for $\theta\in[\pi/4,3\pi/4]$
		\begin{align*}
		q(\theta,\varphi)\geq 1+\cos^2(\theta)(2\cos^2(\theta)-1)= 2\cos^4(\theta)-\cos^2(\theta)+1 \geq \frac{7}{8}.
		\end{align*}

		 By Theorem A, the proof is complete.
	\end{proof}
	
	In fact, as we anticipated in the Introduction, a similar approach allows to handle the case when the ambient manifold is the product of two round spheres of arbitrary dimension (with the sole exception of $S^2\times S^2$). 
	
	\begin{thm} \label{thmspheres}
		Let $M^{n}$ be a closed embedded minimal hypersurface of $S^{p}\times S^{q}$, $p,q\geq 2$ and $(p,q)\neq (2,2)$. Then
		\begin{equation*}
		index(M) \geq \frac{2}{(p+q+2)(p+q+1)} b_{1}(M).
		\end{equation*}
	\end{thm}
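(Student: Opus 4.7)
My plan is to apply Theorem~A using the standard product embedding
\[
\mathcal{N}=S^p\times S^q \hookrightarrow \mathbb{R}^{p+1}\times\mathbb{R}^{q+1}=\mathbb{R}^{p+q+2},
\]
which gives $d=p+q+2$ and hence the announced constant. Writing $\pi_1,\pi_2$ for the tangential projections to the two factors, the product structure combined with the fact that each factor is totally umbilic of constant sectional curvature one should yield the clean formulas
\begin{align*}
|II(X,Y)|^2 &= \langle\pi_1 X,\pi_1 Y\rangle^2+\langle\pi_2 X,\pi_2 Y\rangle^2,\\
Ric^{\mathcal{N}}(X,X) &= (p-1)|\pi_1 X|^2+(q-1)|\pi_2 X|^2,
\end{align*}
with an analogous decomposition of $Rm^{\mathcal{N}}$.

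Given a harmonic one-form $\omega$ on a minimal hypersurface $M^{n}\subset\mathcal{N}$ (automatically two-sided and orientable, by simple connectedness of $\mathcal{N}$), I would decompose pointwise $\omega^{\sharp}=W_1+W_2$, $N=N_1+N_2$ with $W_i=\pi_i\omega^{\sharp}$, $N_i=\pi_i N$, and record the geometry via the scalars $a=|N_1|^2\in[0,1]$, $b=1-a$, $s=|W_1|^2$, $t=|W_2|^2$, $\alpha=\langle N_1,W_1\rangle=-\langle N_2,W_2\rangle$ (the last identity coming from $\omega^{\sharp}\perp N$). Tracing the three quantities above against a local orthonormal frame of $TM$ should reduce the pointwise integrand of Theorem~A's pinching inequality to the compact expression
\[
E \;=\; c_1(a)\,s+c_2(b)\,t-4\alpha^2,
\]
where $c_1(a)=(3-p-q)+(3+q-p)a-2a^2$ and $c_2(b)$ is obtained by swapping $(p,a)\leftrightarrow(q,b)$. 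A short calculus analysis of these quadratics on $[0,1]$ will then confirm that for $(p,q)\neq(2,2)$ with $p,q\geq 2$ one has $c_1,c_2\leq 0$, with strict negativity throughout $[0,1]$ when $p,q\geq 3$, and only a boundary zero $c_1(1)=0$ (nowhere else) in the subcase $p=2$, $q\geq 3$, and symmetrically for $q=2$, $p\geq 3$.

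Pointwise nonpositivity of $E$ immediately gives $\int_M E\,dM\leq 0$; the main obstacle to applying Theorem~A is upgrading this to the strict integral inequality for every non-trivial harmonic one-form. When $p,q\geq 3$ this is automatic, since $E<0$ wherever $\omega\neq 0$. The delicate case is $p=2$, $q\geq 3$ (and its reflection): here $\int_M E=0$ would force, pointwise on $U=\{\omega\neq 0\}$, the rigid configuration $t=0$, $\alpha=0$, $a=1$. The last condition means $N\in TS^2$ and hence $T_{p_2}S^q\subseteq T_pM$ on $U$; integrability of the $S^q$-fibration, combined with Aronszajn's unique continuation theorem for the harmonic one-form $\omega$, should then force $M$ globally to be of the form $\tilde{C}\times S^q$ with $\tilde{C}\subset S^2$ a minimal curve, hence a great circle $S^1$.

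It will then remain to dispose of this exceptional configuration $M\cong S^1\times S^q$ by direct inspection: since this $M$ is totally geodesic in $S^2\times S^q$ with $Ric^{\mathcal{N}}(N,N)=1$, the Jacobi operator reduces to $\Delta_M+1$, and the first non-trivial Laplacian eigenspace on the $S^q$-factor already supplies at least $q+1\geq 4$ negative directions, so that
\[
\mathrm{index}(M)\geq q+1 \geq 1 = b_1(M) \geq \tfrac{2}{(p+q+2)(p+q+1)}\,b_1(M)
\]
and the target bound holds trivially. Assembling the generic and borderline cases then completes the argument. The main technical difficulty, as flagged above, is the rigidity step tying the pointwise equality $E\equiv 0$ to the product structure $M\cong S^1\times S^q$ in the borderline regime.
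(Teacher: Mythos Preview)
The paper omits this proof entirely, saying only that it ``follows along the very same lines'' as the $S^1\times S^n$ argument. Your strategy---applying Theorem~A via the standard product embedding in $\mathbb{R}^{p+q+2}$ and reducing the integrand of \eqref{eqomegaN} to the expression $E=c_1(a)\,s+c_2(b)\,t-4\alpha^2$ with $c_1(a)=(3-p-q)+(3+q-p)a-2a^2$---is exactly that template, and both this formula and your sign analysis of $c_1,c_2$ on $[0,1]$ check out. Your observation that a genuine borderline configuration ($c_1(1)=0$ when $p=2$, symmetrically for $q=2$) arises and must be disposed of by a rigidity argument is a subtlety the paper's one-line remark glosses over; your reduction of that case to $M\cong S^1\times S^q$ via unique continuation and the product foliation is sound.

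There is one error in the final step. On the totally geodesic $M=S^1\times S^q\subset S^2\times S^q$ one has $Q(\phi,\phi)=\int_M(|\nabla\phi|^2-\phi^2)\,dM$, and the first nontrivial Laplace eigenvalue of the $S^q$ factor is $q\geq 3>1$, so those eigenfunctions give $Q=(q-1)\int\phi^2>0$ and supply \emph{no} negative directions. In fact the only Laplace eigenvalue of $S^1\times S^q$ strictly below $1$ is $0$ (the constants), so $index(M)=1$, not $\geq q+1$. This does not damage the conclusion, since $b_1(S^1\times S^q)=1$ and hence $index(M)=1\geq\frac{2}{(q+4)(q+3)}\,b_1(M)$ still holds; your argument goes through once the index count is corrected.
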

	
	For the sake of brevity, we decided to omit the somewhat lenghty proof of this assertion, which follows along the very same lines of the argument we presented for $S^1\times S^n$ with complications of purely notational character.

	\subsection{Pinched convex hypersurfaces of the Euclidean space}
	
	\begin{thm} \label{thmellip}
		Let $(\mathcal{N}^{n+1},g)$ be a closed embedded hypersurface of $\mathbb{R}^{n+2}$ whose principal curvatures $k_1 \leq \ldots \leq k_{n+1}$ with respect to the outward pointing unit normal $\nu$ are positive and satisfy the following pinching condition:
		\begin{equation*}
		\frac{k_{n+1}}{k_{1}} < \sqrt{\frac{n+1}{2}}.
		\end{equation*}
		Then, every closed embedded minimal hypersurface $M^n$ of $\mathcal{N}^{n+1}$ is such that
		\begin{equation*}
		index(M) \geq \frac{2}{(n+2)(n+1)} b_{1}(M).
		\end{equation*}
	\end{thm}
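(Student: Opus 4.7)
The plan is to verify the hypothesis of Theorem A for the given embedding $\mathcal{N}^{n+1}\hookrightarrow\mathbb{R}^{n+2}$, so that the conclusion follows with $d=n+2$, giving the coefficient $\frac{2}{(n+2)(n+1)}$. Since $\mathcal{N}^{n+1}$ has codimension one in $\mathbb{R}^{n+2}$, its second fundamental form is of the form $II(X,Y)=h(X,Y)\nu$, where $h$ is the scalar second fundamental form associated with $\nu$; by hypothesis $h$ is positive definite with eigenvalues in $[k_1,k_{n+1}]$. Moreover, being a compact strictly convex hypersurface, $\mathcal{N}^{n+1}$ is diffeomorphic to a sphere and in particular simply connected, so every closed embedded minimal $M^n\subset\mathcal{N}^{n+1}$ is automatically two-sided and orientable and no covering tricks are needed.

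Next, I rewrite every curvature term appearing in Theorem A purely in terms of $h$. Let $\{e_1,\ldots,e_n\}$ be a local orthonormal frame on $M^n$, let $N$ denote the unit normal to $M$ in $\mathcal{N}$, and set $H_M:=\sum_{k=1}^n h(e_k,e_k)$. The Gauss equation \eqref{eqGaussN} directly gives
\[\sum_{k=1}^n Rm^{\mathcal{N}}(e_k,X,e_k,X)=H_M\,h(X,X)-\sum_{k=1}^n h(e_k,X)^2,\]
and, since $\{e_k\}$ is an orthonormal basis of $N^\perp\subset T\mathcal{N}$, analogously
\[Ric^{\mathcal{N}}(N,N)=H_M\,h(N,N)-\sum_{k=1}^n h(e_k,N)^2.\]
Combined with $|II(e_k,X)|^2=h(e_k,X)^2$ and $|II(e_k,N)|^2=h(e_k,N)^2$, the pointwise difference (LHS $-$ RHS) in the hypothesis of Theorem A becomes
\[\Psi(X):=H_M\bigl[h(X,X)+h(N,N)|X|^2\bigr]-2\sum_{k=1}^n h(e_k,X)^2-2\sum_{k=1}^n h(e_k,N)^2\,|X|^2,\]
which I must show is strictly positive for every non-zero $X$ tangent to $M$.

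Finally, I estimate $\Psi(X)$ pointwise using the spectral bounds on $h$. Positivity yields $h(X,X)\ge k_1|X|^2$ and $h(N,N)\ge k_1$, so the bracket is at least $2k_1|X|^2$; the min-max (Cauchy interlacing) principle applied to $h|_{T_pM}$ gives $H_M\ge nk_1$. For the subtracted terms, write $\sum_k h(e_k,X)^2=|\hat h(X)|^2-h(N,X)^2\le k_{n+1}^2|X|^2$, where $\hat h$ denotes the shape operator, and $\sum_k h(e_k,N)^2=|\hat h(N)|^2-h(N,N)^2\le k_{n+1}^2-k_1^2$, the latter following from $h(N,N)^2\ge k_1^2$. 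Assembling these bounds yields
\[\Psi(X)\ge\bigl[2(n+1)k_1^2-4k_{n+1}^2\bigr]|X|^2,\]
which is strictly positive at every point of $\mathcal{N}^{n+1}$ precisely under $k_{n+1}/k_1<\sqrt{(n+1)/2}$. Integrating over $M$ produces the strict inequality demanded by Theorem A and yields the asserted index bound. The one subtle point is obtaining the sharp constant $\sqrt{(n+1)/2}$ rather than a cruder pinching like $\sqrt{n/2}$; this depends on the refinement $\sum_k h(e_k,N)^2\le k_{n+1}^2-k_1^2$, which genuinely uses the strict positivity of the smallest principal curvature. The rest of the argument is purely algebraic.
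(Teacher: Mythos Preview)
Your proof is correct and follows essentially the same route as the paper's: both rewrite the key integrand of Proposition~\ref{propomegaN} purely in terms of the shape operator of $\mathcal{N}^{n+1}\subset\mathbb{R}^{n+2}$, then bound each term using $k_1\le h\le k_{n+1}$ (including the refinement $\sum_k h(e_k,N)^2\le k_{n+1}^2-k_1^2$) to obtain the pointwise inequality $\Psi(X)\ge\bigl[2(n+1)k_1^2-4k_{n+1}^2\bigr]|X|^2$, and conclude via Theorem~A. The only cosmetic difference is notation ($h$ versus $S$) and your invocation of Cauchy interlacing for $H_M\ge nk_1$, where the elementary bound $h(e_k,e_k)\ge k_1$ already suffices.
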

	\begin{proof}
		\indent It is convenient to consider the shape operator $S$ of $\mathcal{N}^{n+1}$, i.e., the endomorphism of the tangent bundle of $\mathcal{N}^{n+1}$ defined by 
		\begin{equation*}
		S : X \in \mathcal{X}(\mathcal{N}) \mapsto -\nabla_{X}\nu \in \mathcal{X}(\mathcal{N}).
		\end{equation*}
		\indent $S$ and $II$ are related by $II(X,Y) = \langle S(X),Y\rangle\nu$. Using the Gauss equation (\ref{eqGaussN}) for $\mathcal{N}^{n+1}$, we can rewrite the integrand appearing in formula \eqref{eqomegaN} only in terms of $S$. Thereby, one obtains
		\begin{multline*}
		 \sum_{k=1}^{n} |II(e_k,\omega^{\sharp})|^2 + \sum_{k=1}^{n} |II(e_k,N)|^2|\omega|^2 
		\\ -  \sum_{k=1}^{n} Rm^{\mathcal{N}}(e_k,\omega^{\sharp},e_k,\omega^{\sharp}) - Ric^{\mathcal{N}}(N,N)|\omega|^2  \\ = -  \left(\langle S(\omega^{\sharp}),\omega^{\sharp}\rangle + \langle S(N),N\rangle|\omega|^2\right)\sum_{k=1}^{n} \langle S(e_k),e_k\rangle  \\ + 2 \sum_{k=1}^{n} \left(\langle S(\omega^{\sharp}),e_k\rangle^2 + \langle S(N),e_k\rangle^2|\omega|^2\right). 
		\end{multline*}
		\indent Choosing an orthonormal frame $\{e_{1},\ldots,e_{n},N\}$ on $\mathcal{N}^{n+1}$ and observing that \begin{equation*}
		k_{1}|X|^2 \leq \,\, \langle S(X),X\rangle \,\, \leq k_{n+1}|X|^2 \quad \text{for all} \quad X\in \mathcal{X}(\mathcal{N}),
		\end{equation*}
		\noindent it is possible to derive the following estimates:
		\begin{align*}
		\sum_{k=1}^{n} \langle S(\omega^{\sharp}),e_k\rangle ^2 = |S(\omega^{\sharp})|^2 - \langle S(\omega^{\sharp}),N\rangle^2 & \leq k_{n+1}^{2}|\omega|^2 \\
		\sum_{k=1}^{n} \langle S(N),e_k\rangle^2|\omega|^2  = (|S(N)|^2 - \langle S(N),N\rangle^2)|\omega|^2 & \leq (k_{n+1}^2-k_{1}^{2})|\omega|^ 2 \\
		\left(\langle S(\omega^{\sharp}),\omega^{\sharp}\rangle + \langle S(N),N\rangle|\omega|^2\right)\sum_{k=1}^{n} \langle S(e_k),e_k\rangle & \geq 2nk_{1}^2|\omega|^2.
		\end{align*}
		\indent Therefore
		\begin{multline*}
		 \sum_{k=1}^{n} |II(e_k,\omega^{\sharp})|^2 + \sum_{k=1}^{n} |II(e_k,N)|^2|\omega|^2 
		\\ -  \sum_{k=1}^{n} Rm^{\mathcal{N}}(e_k,\omega^{\sharp},e_k,\omega^{\sharp}) - Ric^{\mathcal{N}}(N,N)|\omega|^2  \\ \leq  (4k_{n+1}^2 - 2(n+1)k_1^2)|\omega|^2 \leq 0,
		\end{multline*}
		\noindent by our pinching assumption. The result follows since the inequality above is strict over the set where $|\omega|^2 \neq 0$ which is open (and non-empty since $\omega$ is assumed to be non-trivial). Thus we get that \eqref{eqomegaN} is strictly negative and the result follows  from Theorem A.
	\end{proof}
	
	\
	\begin{rmk}
		\indent In dimension $n=2$, by similar manipulations it is possible to obtain from formula (\ref{eqauxpropomega}) a slightly better result in the sense that the weaker pinching condition
		\begin{equation*}
		\frac{k_3}{k_1} < \sqrt{\frac{5}{3}}
		\end{equation*}
		\noindent still allows one to show that every closed minimal surface in such ambient space satisfies $index(M) \geq b_{1}(M)/4$. In particular, this implies that (in the setting considered in this subsection) a two-sided closed minimal surface of index one must have genus at most two, a conclusion that is conjectured to be true in any ambient three-manifold of positive Ricci curvature. We would like to thank Fernando Cod\'a Marques for pointing out this connection.
	\end{rmk}
	
	\
	
	\subsection{Pinched three-manifolds}\label{subs:threeman}
	
	\begin{thm} \label{thmpinch3}
		Let $(\mathcal{N}^{3},g)$ be a closed Riemannian three-manifold with positive scalar curvature. Assume there is an isometric embedding of $(\mathcal{N}^{3},g)$ into some Euclidean space $\mathbb{R}^{d}$ such that 
		\begin{equation*}
		R^{\mathcal{N}} > \frac{1}{2}|\vec{H}^{\mathcal{N}}|^2.
		\end{equation*}
		\indent Then, any closed oriented minimal hypersurface $M^2$ of $\mathcal{N}^{3}$ is such that
		\begin{equation*}
		index(M) \geq \frac{1}{2d} b_1(M).
		\end{equation*}
	\end{thm}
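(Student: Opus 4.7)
The plan is to invoke Proposition \ref{propmainb} with $\eta = 0$ and $\mathcal{V}$ equal to the full space of harmonic one-forms on $M^2$, whose dimension is $b_1(M)$ by Hodge's theorem. The conclusion of that proposition is precisely the desired bound $\text{index}(M) \geq \frac{1}{2d}\, b_1(M)$, so the task reduces entirely to verifying the pinching hypothesis \eqref{deshypmainb} for every non-trivial harmonic one-form $\omega$ on $M^2$.

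The first key step is a basis-independent identity: for any orthonormal frame $\{e_1, e_2\}$ of $T_pM$,
\begin{equation*}
\sum_{k=1}^{2}\left(|II(e_k, \omega^{\sharp})|^2 + |II(e_k, *\omega^{\sharp})|^2\right) = |\omega|^2 \sum_{i,j=1}^{2}|II(e_i, e_j)|^2.
\end{equation*}
This can be established by evaluating at a point where $\omega \neq 0$ in the orthonormal basis $e_1 = \omega^{\sharp}/|\omega|$, $e_2 = \pm(*\omega)^{\sharp}/|\omega|$, in which case both sides collapse to $|\omega|^2 (|II(e_1,e_1)|^2 + 2|II(e_1,e_2)|^2 + |II(e_2,e_2)|^2)$; the common value is, by definition, $|\omega|^2$ times the squared norm of the restriction to $TM$ of the ambient second fundamental form of $\mathcal{N}^3$ in $\mathbb{R}^d$, and the identity extends by continuity to the zero set of $\omega$.

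Next I would trace the Gauss equation \eqref{eqGaussN} twice over an orthonormal frame of $T\mathcal{N}$ to obtain the compact formula
\begin{equation*}
R^{\mathcal{N}} = |\vec{H}^{\mathcal{N}}|^2 - |II|^2,
\end{equation*}
where $\vec{H}^{\mathcal{N}}=\sum_{i}II(e_i,e_i)$ denotes the mean curvature vector of $\mathcal{N}^3$ in $\mathbb{R}^d$. Substituting this into the pinching hypothesis $R^{\mathcal{N}} > \frac{1}{2}|\vec{H}^{\mathcal{N}}|^2$ yields $|II|^2 < \frac{1}{2}|\vec{H}^{\mathcal{N}}|^2$, and hence $R^{\mathcal{N}} = |\vec{H}^{\mathcal{N}}|^2 - |II|^2 > |II|^2$ pointwise on $\mathcal{N}^3$. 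On the other hand, completing $\{e_1, e_2\}$ with a unit normal $N$ to $M$ in $\mathcal{N}$ gives the trivial pointwise inequality $\sum_{i,j=1}^{2}|II(e_i, e_j)|^2 \leq |II|^2$, since $|II|^2$ contains the additional nonnegative contributions $2\sum_{k}|II(e_k, N)|^2 + |II(N,N)|^2$.

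Combining these three facts yields the pointwise estimate
\begin{equation*}
\sum_{k=1}^{2}\left(|II(e_k, \omega^{\sharp})|^2 + |II(e_k, *\omega^{\sharp})|^2\right) \leq |II|^2 |\omega|^2 < R^{\mathcal{N}}|\omega|^2
\end{equation*}
on the set where $\omega \neq 0$, with equality to zero on the zero set of $\omega$. Since a non-trivial harmonic one-form on a closed manifold is non-vanishing on a non-empty open set, integration over $M$ produces the strict inequality \eqref{deshypmainb} with $\eta = 0$, and Proposition \ref{propmainb} then delivers the theorem. The argument is essentially a direct application of the general machinery developed earlier in the paper; the only mildly delicate point is the algebraic identity of the first paragraph, which is what makes the two independent test-function contributions conspire to precisely the squared norm of the restriction of $II$ to $TM$, a quantity controlled by the Gauss-trace identity through the pinching hypothesis.
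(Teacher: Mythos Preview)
Your proof is correct and follows essentially the same approach as the paper: both invoke Proposition \ref{propmainb} with $\eta=0$, reduce the pointwise integrand to $\bigl(\sum_{i,j=1}^{2}|II(e_i,e_j)|^2 - R^{\mathcal{N}}\bigr)|\omega|^2$ via the adapted frame $e_1=\omega^{\sharp}/|\omega|$, $e_2=*\omega^{\sharp}/|\omega|$, bound $\sum_{i,j=1}^{2}|II(e_i,e_j)|^2\leq |II|^2$, and then use the traced Gauss equation $R^{\mathcal{N}}=|\vec H|^2-|II|^2$ together with the pinching hypothesis to conclude. Your rewriting of the pinching condition as $R^{\mathcal{N}}>|II|^2$ is equivalent to the paper's formulation $|II|^2-R^{\mathcal{N}}=|\vec H|^2-2R^{\mathcal{N}}<0$.
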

	\begin{proof}
		The result will follow immediately from Proposition \ref{propmainb} once we verify that
		\begin{equation*}
		\int_{M} \sum_{k=1}^{2} \left(|II(e_k,\omega^{\sharp})|^2 + |II(e_k,*\omega^{\sharp})|^2\right) - R^{\mathcal{N}}|\omega|^2 dM < 0
		\end{equation*}
		\noindent for all harmonic one-forms $\omega$ on an oriented minimal surface $M^2$ in $(\mathcal{N}^3,g)$. \\
		\indent The contraction of the Gauss equation for $\mathcal{N}^3$ gives
		\begin{equation*}
		R^{\mathcal{N}} = |\vec{H}|^2 - |II|^2,
		\end{equation*}
		\noindent where $\vec{H} = tr II$ denotes the mean curvature vector of $\mathcal{N}^3$ in $\mathbb{R}^d$. Since at points where $\omega$ does not vanish $\{e_{1}=\omega^{\sharp}/|\omega^{\sharp}|,e_{2}=*\omega^{\sharp}/|\omega^{\sharp}|\}$ is an orthonormal basis of the tangent space, we have
		\begin{multline*}
		\sum_{k=1}^{2} \left(|II(e_k,\omega^{\sharp})|^2 + |II(e_k,*\omega^{\sharp})|^2\right) - R^{\mathcal{N}}|\omega|^2  \\
		= \sum_{i,j=1}^{2}(|II(e_i,e_j)|^2 - R^{\mathcal{N}})|\omega|^2 \\ \leq (|II|^2 - R^{\mathcal{N}})|\omega|^2  \leq  (|\vec{H}|^2 - 2R^{\mathcal{N}})|\omega|^2 \leq 0,
		\end{multline*}
		\noindent by the pinching assumption. Once again this inequality is strict when $|\omega|^2 \neq 0$ which occurs on a non-empty open set. Thus once we integrate the result follows.
	\end{proof}		
	
	\appendix
	
	\section{The borderline case in complex projective spaces}\label{app:borderline}				
	
	Let $(\mathcal{N}^{n+1},g)$ be the complex projective space of (real) dimension $2m$ endowed with the Fubini-Study metric, namely $(\mathbb{C}\mathbb{P}^m, g_{FS})$. Let us recall here that such complex manifold is K\"ahler, which implies that the associated almost-complex structure $J$ is parallel with respect to the Levi-Civita connection (i. e. $\nabla J=0$), a fact that we are about to exploit in the sequel of this appendix. \\

	\indent If $M^n$ is a closed, embeded minimal hypersurface in such ambient manifold, we have already proven in Subsection \ref{subs:rankone} the inequality
	\begin{multline*}
	\int_{M} \left[\sum_{k=1}^{n}|II(e_k,\omega^{\sharp})|^2 + \sum_{k=1}^{n}|II(e_k,N)|^2|\omega|^{2}\right]dM \\ - \int_{M} \left[\sum_{k=1}^{n} Rm^{\mathcal{N}}(e_k,\omega^{\sharp},e_k,\omega^{\sharp}) + Ric^{\mathcal{N}}(N,N)|\omega|^2\right] dM \leq 0,
	\end{multline*}
	\noindent with equality only when the harmonic form $\omega$ on $M^{n}$ is such that $Rm^{\mathcal{N}}(N,\omega^{\sharp},N,\omega^{\sharp}) = |\omega|^2+3g(\omega^{\sharp},JN)^2 = 4|\omega|^2$ and (by virtue of the Cauchy-Schwartz inequality) this happens if and only if there exists a smooth function $f:M\to\mathbb{R}$ such that $\omega^{\sharp}=f JN$. 
	
	The key point of this appendix, which completes the proof of Theorem \ref{thm:compl} is the following assertion:
	
	\begin{prop0}\label{prop0}
		Let $M^n$ be a closed, embedded minimal hypersurface in $(\mathbb{C}\mathbb{P}^m, g_{FS})$. Suppose that a harmonic one-form $\omega$ is proportional to $(JN)^{\flat}$ at each point of $M^n$. Then such form vanishes identically on $M^n$.
	\end{prop0}	  
	
	Let us see why. Before getting to the proof, we shall present some preliminary lemmata.
	\begin{rmk}
				From now onwards, we shall set $JN=U$ in order to have a distinguished notation for this vector. Also, we let $\eta=U^{\flat}$. Furthermore, we shall denote here (coherently with the rest of the article) by $\nabla$ the Levi-Civita connection of $(\mathcal{N}^{n+1},g)$ and by $\nabla^{M}$ the induced connection on the hypersurface $M^n$. More generally, we shall adopt a superscript/subscript when referring to differential operators on $M^n$.	
			\end{rmk}

			\begin{lemm}\label{lem:skew}
				In the setting above, the following assertions are true:
				\begin{enumerate}
					
					\item[a)]{$g(\nabla^{M} f, U)=0$;}
					\item[b)]{$f\nabla_U N=-J\nabla^{M}f$.}
				\end{enumerate}		
			\end{lemm}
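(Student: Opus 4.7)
Both parts exploit the harmonicity of $\omega$ (i.e., $d\omega = d^{\ast}\omega = 0$) together with the K\"ahler identity $\nabla J = 0$, using one of the two equations for each assertion.

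For (a), the plan is to translate $d^{\ast}\omega = 0$ as $\operatorname{div}_M(\omega^{\sharp}) = 0$, i.e.
\[ g(\nabla^M f, U) + f\operatorname{div}_M U = 0, \]
and then to prove the general identity $\operatorname{div}_M U\equiv 0$ for any real hypersurface of a K\"ahler manifold. Indeed, $\nabla J = 0$ gives $\nabla_X U = J\nabla_X N$ on $M$, so
\[ \operatorname{div}_M U = \sum_k g(J\nabla_{e_k} N, e_k) = -\operatorname{tr}(\tilde S\,\phi), \]
where $\tilde S(X):= \nabla_X N$ is the self-adjoint shape operator of $M$ and $\phi(X):= (JX)^T$ is the tangential projection of $J$, which is skew-symmetric on $TM$ (as $g(\phi X,Y) = g(JX,Y) = -g(X,JY) = -g(X,\phi Y)$ for $X,Y\in TM$). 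Since the trace of a product of a self-adjoint and a skew-adjoint operator vanishes, (a) follows.

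For (b), I would use $d\omega = 0$. Writing $\omega = f\eta$ with $\eta = U^{\flat}$ and evaluating $d\omega(X, U) = 0$ for an arbitrary $X\in TM$, using (a) together with $g(U,U) = 1$, $JU = -N$, and $g(J\nabla_X N, U) = g(\nabla_X N, N) = 0$, one obtains the pointwise identity $X(f) = f\,g(J\nabla_U N, X)$, equivalently $\nabla^M f = f\,(J\nabla_U N)^T$. Applying the ambient endomorphism $J$ to this identity and using $J^2 = -I$ together with $g(J\nabla_U N, N) = -g(\nabla_U N, U)$, a straightforward manipulation yields
\[ f\nabla_U N + J\nabla^M f = f\,g(\nabla_U N, U)\,U. \]
Thus the assertion (b) is equivalent to the pointwise vanishing $f\,g(\nabla_U N, U)\equiv 0$ on $M$.

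This last vanishing is the main obstacle, and is precisely where minimality of $M$ is crucially used. Evaluating $d\omega(X, Y) = 0$ on $X, Y\in U^{\perp}\cap TM$ (a $J$-invariant subspace, since $X\in U^{\perp}\cap TM$ gives both $g(JX,U) = g(X,N) = 0$ and $g(JX,N) = -g(X,U) = 0$) produces, at points where $f\neq 0$, the symmetry $g(\tilde S X, JY) = g(\tilde S Y, JX)$. Equivalently, the compressed shape operator $T := \pi_{U^{\perp}\cap TM}\circ\tilde S|_{U^{\perp}\cap TM}$ \emph{anti-commutes} with $J|_{U^{\perp}\cap TM}$. Since $T$ is self-adjoint and $Tv = \lambda v$ implies $T(Jv) = -JTv = -\lambda Jv$, its eigenvalues come in pairs $\{\lambda, -\lambda\}$, forcing $\operatorname{tr} T = 0$. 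Combined with the minimality $\operatorname{tr}\tilde S = 0$ and the decomposition $\operatorname{tr}\tilde S = g(\nabla_U N, U) + \operatorname{tr} T$, this gives $g(\nabla_U N, U) = 0$ wherever $f\neq 0$, completing the proof of (b).
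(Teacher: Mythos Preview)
Your proof is correct, and for part (a) it coincides with the paper's argument: both compute $\operatorname{div}_M\omega=0$ and reduce to $\operatorname{div}_M U=0$. You prove the latter directly via $\operatorname{tr}(\tilde S\phi)=0$; the paper simply cites \cite{Kon80}.

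For part (b) you and the paper start identically, evaluating $d\omega$ with one slot equal to $U=JN$ and obtaining $df(Y)=-f\,g(\nabla_U N,JY)$ for all $Y\in TM$. The paper then asserts the conclusion $f\nabla_U N=-J\nabla^M f$ directly. As you observed, testing that identity against $Y\in U^{\perp}\cap TM$ and against $N$ works (the latter via part (a)), but testing against $Y=U$ leaves the residual term $f\,g(\nabla_U N,U)$, which is not obviously zero. Your additional argument --- evaluating $d\omega=0$ on the $J$-invariant subspace $U^{\perp}\cap TM$, deducing that the compressed shape operator $T$ anticommutes with $J$ there and hence has vanishing trace, and then invoking minimality to conclude $g(\nabla_U N,U)=\operatorname{tr}\tilde S-\operatorname{tr}T=0$ wherever $f\neq0$ --- is correct and fills a step that the paper's proof does not explicitly justify. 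In this sense your argument is more complete than the paper's own, and you are right that minimality is needed precisely at this point (indeed, without it one gets $g(\nabla_U N,U)=\operatorname{tr}_M\tilde S$, which need not vanish).
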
	
			
			\begin{proof}
				We shall start by recalling that $div_{M}(\eta)=0$, which is rather standard (see e. g. pg. 286 of \cite{Kon80}). 
				Once we know this, taking the divergence of $\omega$ (and recalling the first-order characterization of harmonicity, see Subsection \ref{subs:harm}) gives
				\begin{equation*}
				0 = div_M\omega = \eta (\nabla^{M} f) = g(U,\nabla ^{M}f),
				\end{equation*}
				\noindent that is, $f$ is constant along each flow line of the flux generated by the vector field $U=JN$.\\

				For what concerns part b), taking the exterior derivative of $\omega=f\eta $ and evaluating it for $X=JN$ and an arbitrary $Y$ (a section of $TM$), we have 
				\begin{align*}
				0 = d\omega(JN,Y) & = df(JN)\eta(Y) - df(Y)\eta(JN) + fd\eta(JN,Y) \\
				& = -df(Y)+f (JN(\eta(Y))-Y(\eta(JN))-\eta([JN,Y])) \\
				& = - df(Y)+f((\nabla^{M}_{JN}\eta) (Y)-(\nabla^{M}_{Y}\eta)(JN)) \\
				& = -df(Y)+fg(\nabla^{M}_{JN}JN, Y)-fg(\nabla^{M}_{Y}JN, JN) \\
				& = -df(Y)+fg(\nabla_{JN}JN,Y)-fg(\nabla_{Y} JN,JN) \\
				& = -df(Y)+fg(J\nabla_{JN}N,Y)+fg(J\nabla_Y N, JN) \\
				& = -df(Y)-f g(\nabla_{JN}N, JY)+f g(\nabla_{Y}N, N) \\
				& = - df(Y) -f g(\nabla_{JN}N, JY)
				\end{align*}
				(since: $\eta(JN)=1$ identically and $g(\nabla_Y N,N)=0$ because $g(N,N)=1$ identically, and we have used that $J$ is a skew-symmetric operator and the ambient manifold is K\"ahler, which ensures that $\nabla J =0$). 
				\noindent That is, we have obtained $f\nabla_{JN}N = -J\nabla^{M}f$, which completes the proof. 
			\end{proof}	
			
			\begin{lemm}\label{lem:gauss}
				In the setting above, we have
				\[
				Ric^{M}(\omega^{\sharp},\omega^{\sharp})= (2m-2)f^2-|\nabla^{M}f|^2.
				\]
			\end{lemm}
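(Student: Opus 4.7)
The plan is to compute $Ric^M(\omega^{\sharp},\omega^{\sharp})=f^{2}\,Ric^M(U,U)$ by contracting the Gauss equation and then exploit the algebraic identities available because $\mathcal N^{n+1}=\mathbb{CP}^m$ is K\"ahler--Einstein and $U=JN$ is a \emph{unit} tangent vector of $M^n$ (the latter following from $g(JN,JN)=g(N,N)=1$ and $g(JN,N)=-g(N,JN)$, which forces $g(JN,N)=0$). The role of Lemma \ref{lem:skew}\,b) is to convert the shape-operator term into the differential of $f$.

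First, tracing the Gauss equation \eqref{eqGaussM} on a local orthonormal frame $\{e_k\}$ of $M^n$ and invoking the minimality condition $\sum_k h(e_k,e_k)=0$ (where $h$ denotes the scalar second fundamental form, i.e.\ $A(X,Y)=h(X,Y)N$), I obtain the standard identity
\begin{equation*}
Ric^M(X,X)=\sum_{k=1}^{n} Rm^{\mathcal N}(X,e_k,X,e_k)-\sum_{k=1}^{n} h(X,e_k)^2.
\end{equation*}
Specialising to $X=U$ and completing $U$ to an orthonormal frame of $T\mathcal{N}$ of the form $\{U,e_2,\dots,e_n,N\}$, the ambient curvature sum becomes $Ric^{\mathcal N}(U,U)-Rm^{\mathcal N}(U,N,U,N)$. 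Reading off the table in Subsection~\ref{subs:rankone} and using $g(U,JN)=g(JN,JN)=1$, this equals $(2m+2)-(1+3)=2m-2$.

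Next, for the shape-operator term I write $h(U,e_k)=-g(\nabla_{U}N,e_k)$ and observe that $\nabla_{U}N$ is tangent to $M$ (since $g(\nabla_{U}N,N)=0$), so
\begin{equation*}
\sum_{k=1}^{n} h(U,e_k)^{2}=|\nabla_{U}N|^{2}.
\end{equation*}
By Lemma \ref{lem:skew}\,b), $f\nabla_{U}N=-J\nabla^{M}f$, hence on the open set $\{f\neq 0\}$ one has $|\nabla_{U}N|^{2}=|\nabla^{M}f|^{2}/f^{2}$ (using that $J$ is an isometry). Multiplying through by $f^2$ gives the claimed formula $Ric^M(\omega^{\sharp},\omega^{\sharp})=(2m-2)f^{2}-|\nabla^{M}f|^{2}$ on $\{f\neq 0\}$.

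Finally, to extend the identity to the full hypersurface I must deal with the (possibly nonempty) zero set of $f$. At a point where $f=0$ the left-hand side vanishes because $\omega^{\sharp}=0$, while $f\nabla_{U}N=-J\nabla^{M}f$ forces $J\nabla^{M}f=0$ there, and hence $\nabla^{M}f=0$ since $J$ is an isomorphism; therefore the right-hand side also vanishes. By continuity the formula then holds on all of $M^n$. The only delicate point I foresee is keeping track of the sign and normalisation conventions between $A$, $h$, and $\nabla N$, and justifying cleanly the passage through $\{f=0\}$; both are routine once the first-order identity of Lemma \ref{lem:skew}\,b) is in hand.
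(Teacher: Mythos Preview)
Your proof is correct and follows essentially the same route as the paper: trace the Gauss equation for the minimal hypersurface to obtain $Ric^{M}(U,U)=Ric^{\mathcal N}(U,U)-Rm^{\mathcal N}(U,N,U,N)-|A(U,\cdot)|^{2}=(2m-2)-|\nabla_{U}N|^{2}$, then invoke Lemma~\ref{lem:skew}\,b) to identify $f^{2}|\nabla_{U}N|^{2}=|f\nabla_{U}N|^{2}=|J\nabla^{M}f|^{2}=|\nabla^{M}f|^{2}$. The paper uses this last identity directly (valid everywhere, including on $\{f=0\}$), whereas you divide by $f^{2}$ first and then treat the zero set separately; both are fine, though the former avoids the case split.
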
	
			
			\begin{proof}
				Tracing the Gauss equation \eqref{eqGaussM} gives, when $M^n$ is minimal, the equation
				\[
				Ric^{M}(U,U)= Ric^{\mathcal{N}}(U,U)-Rm^{\mathcal{N}}(U,N,U,N)- |A(U,\cdot)|^2. 
				\]
				
				Now, it has been recalled that in our setting
				\[
				Rm^{\mathcal{N}}(N,\omega^{\sharp},N,\omega^{\sharp}) = 4|\omega|^2
				\]
				so that, using also the fact that $(\mathbb{C}\mathbb{P}^m,g_{FS})$ is Einstein with constant $n+3=2m+2$,   we obtain 
				\[
				Ric^{M}(U,U)=(2m-2)- |A(U,\cdot)|^2 
				\] 
				which proves the asserted identity once we make use of part b) of Lemma \ref{lem:skew}.
			\end{proof}	
			
			\begin{lemm}\label{lem:poinc}
					In the setting above,
					\[
					|\nabla^{M}f|^2 \leq |\nabla^{M}\omega|^2.
					\]
					\end{lemm}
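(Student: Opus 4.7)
The plan is to prove the pointwise inequality $|\nabla^{M}\omega|^2 \geq |\nabla^{M} f|^2$ at every point of $M^n$, from which the lemma follows upon integration (in fact pointwise already suffices). The starting observation is that the vector field $U = JN$, to which $\omega^{\sharp}$ is proportional by hypothesis, is actually a \emph{unit tangent} vector field on $M^n$. Indeed, tangency follows from the skew-symmetry of $J$ with respect to $g$ (itself a consequence of $J^2 = -\mathrm{Id}$ and $J$ being an isometry), which yields $g(JN,N) = -g(N,JN)$ and hence $g(U,N) = 0$; the unit length is immediate from $|JN| = |N| = 1$.

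Once this is in place, I would invoke the Leibniz rule for the induced Levi-Civita connection $\nabla^{M}$: for any $X \in TM$,
\[
\nabla^{M}_X \omega^{\sharp} = \nabla^{M}_X (fU) = X(f)\, U + f\, \nabla^{M}_X U.
\]
Since $|U|^2 \equiv 1$ on $M^n$, differentiating gives $g(U, \nabla^{M}_X U) = \tfrac{1}{2} X(|U|^2) = 0$, so the cross term in the squared norm drops out and we obtain
\[
|\nabla^{M}_X \omega^{\sharp}|^2 = X(f)^2 + f^2 |\nabla^{M}_X U|^2 \geq X(f)^2.
\]
Summing over a local orthonormal frame $\{e_1,\dots,e_n\}$ of $TM$ and using the standard identification $|\nabla^{M}\omega|^2 = \sum_k |\nabla^{M}_{e_k} \omega^{\sharp}|^2$ (the musical isomorphism being parallel) yields exactly $|\nabla^{M}\omega|^2 \geq |\nabla^{M} f|^2$.

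This argument is essentially a two-line calculation, so there is no serious obstacle; the only conceptual point to get right is the initial observation that $U \in \mathfrak{X}(M)$ (rather than merely a section of $T\mathcal{N}|_M$), which is what permits the use of the intrinsic connection $\nabla^{M}$ and, crucially, lets the constancy of $|U|$ kill the cross term. We remark that the argument gives the strengthened pointwise identity $|\nabla^{M}\omega|^2 = |\nabla^{M} f|^2 + f^2 |\nabla^{M} U|^2$, which could be useful elsewhere in the borderline analysis but is not needed for the present lemma.
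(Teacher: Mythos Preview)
Your proof is correct and follows essentially the same route as the paper's: differentiate $\omega^{\sharp}=fU$ via the Leibniz rule, use that $U=JN$ is a unit tangent field to kill the cross term, and conclude the pointwise inequality. Your added remark that $U$ is tangent to $M^n$ makes explicit a point the paper leaves implicit.
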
		
					
					\begin{proof}
						Differentiating $\omega^{\sharp} = f(JN)$, we have $\nabla^{M}_{X}\omega^{\sharp} = (\nabla^{M}_{X}f)JN + f\nabla^{M}_{X}JN$ for every vector field $X$ tangent to $M^n$. Therefore
						\[
						|\nabla^{M}_{X}\omega^{\sharp}|^2 =  |\nabla^{M}_{X}f|^2 +2f(\nabla^{M}_{X}f)g(JN,\nabla_{X}^{M}JN) + f^2|\nabla^{M}_{X}JN|^2.
						\]
						
						Since $2g(JN,\nabla_{X}^{M}JN) = Xg(JN,JN)= Xg(N,N)=0$, we obtain
						\[
						|\nabla^{M}_{X}\omega^{\sharp}|^2 =  |\nabla^{M}_{X}f|^2 + f^2|\nabla^{M}_{X}JN|^2 \geq |\nabla^{M}_{X}f|^2
						\]
						for all tangent vector fields $X$. The conclusion follows.	
					\end{proof}

			Now, let us proceed with the proof of Proposition A.0. 
			
			\begin{proof}
				The standard Bochner identity for one-forms asserts that
				\[
				\Delta_1 \alpha = - \Delta \alpha +Ric^{M}(\alpha^{\sharp},\cdot)
				\]	
				for any smooth one-form $\alpha$. To avoid ambiguities, let us observe that $\Delta=\Delta^{M}$ stands for the standard/elementary Laplace-Beltrami operator on $M$. 
				Hence, if $\omega$ is harmonic, as we are assuming, then
				\[
				\Delta \omega = Ric^{M} (\omega^{\sharp},\cdot).
				\]
				
				This is an equation between one-forms so we can \textsl{choose} the vector field to test it with, so let us evaluate both sides on $\omega^{\sharp}$. If we do so and then integrate over the manifold $M$ we get to the identity
				\[
				(2m-2)\int_{M}| \omega|^2\,dM+ \int_{M} (|\nabla^{M}\omega|^2-|\nabla^{M} f|^2)\,dM=0
				\]where we have used Lemma \ref{lem:gauss} for the right-hand side.
				Since $m\geq 2$, then Lemma \ref{lem:poinc} ensures that $\omega$ is the trivial one-form and we are done. 
			\end{proof}

\end{document}